\numberwithin{equation}{section}
\numberwithin{figure}{section}
\theoremstyle{plain}
\newtheorem{thm}{\protect\theoremname}
\theoremstyle{plain}
\newtheorem{lem}[thm]{\protect\lemmaname}
\theoremstyle{plain}
\newtheorem{cor}[thm]{\protect\corollaryname}
\theoremstyle{remark}
\newtheorem{rem}[thm]{\protect\remarkname}
\theoremstyle{plain}
\newtheorem{prop}[thm]{\protect\propositionname}
\numberwithin{thm}{section}
\providecommand{\corollaryname}{Corollary}
\providecommand{\lemmaname}{Lemma}
\providecommand{\propositionname}{Proposition}
\providecommand{\remarkname}{Remark}
\providecommand{\theoremname}{Theorem}
\begin{document}
\title{The atoms of operator-valued free convolutions}
\author{Serban T. Belinschi, Hari Bercovici, and Weihua Liu}
\address{CNRS-Institute de Math\'ematiques de Toulouse\\
118 Route de Narbonne\\
31062, Toulouse, France}
\email{Serban.Belinschi@math.univ-toulouse.fr}
\address{Department of Mathematics\\
Indiana University\\
Bloomington, IN 47405, USA}
\email{bercovic@indiana.edu}
\address{Department of Mathematics\\
The University of Arizona\\
617 N. Santa Rita Ave. \\
P.O. Box 210089 \\
Tucson, AZ 85721-0089 USA}
\email{weihualiu@math.arizona.edu}
\begin{abstract}
Suppose that $X_{1}$ and $X_{2}$ are two selfadjoint random variables
that are freely independent over an operator algebra $\mathcal{B}$.
We describe the possible operator atoms of the distribution of $X_{1}+X_{2}$
and, using linearization, we determine the possible eigenvalues of
an arbitrary polynomial $p(X_{1},X_{2})$ in case $\mathcal{B}=\mathbb{C}$.
\end{abstract}

\maketitle

\section{Introduction\label{sec:Introduction}}

Suppose that $\mathcal{A}$ is a von Neumann algebra, $\tau$ is a
faithful normal trace state on $\mathcal{A},$ and $X_{1},X_{2}\in\mathcal{A}$
are selfadjoint. Suppose, in addition, that $\alpha_{1},\alpha_{2}\in\mathbb{R}$
are eigenvalues of $X_{1}$ and $X_{2}$, respectively, and $p_{1},p_{2}\in\mathcal{A}$
are the orthogonal projections onto $\ker(X_{1}-\alpha_{1}1_{\mathcal{A}})$
and $\ker(X_{2}-\alpha_{2}1_{\mathcal{A}})$, respectively. If $\tau(p_{1})+\tau(p_{2})>1$,
it follows that $p=p_{1}\wedge p_{2}$ is nonzero, $\tau(p)\ge\tau(p_{1})+\tau(p_{2})-1$,
and
\[
(X_{1}+X_{2})p=X_{1}p_{1}p+X_{2}p_{2}p=\alpha_{1}p+\alpha_{2}p=(\alpha_{1}+\alpha_{2})p.
\]
Thus, $\alpha_{1}+\alpha_{2}$ is an eigenvalue of $X_{1}+X_{2}$.
It was observed in \cite{ber-voi-otaa} that the converse statement
is true if $X_{1}$ and $X_{2}$ are freely independent with respect
to $\tau$. More precisely, if $\alpha\in\mathbb{R}$ is an arbitrary
eigenvalue of $X_{1}+X_{2}$ and $p$ denotes the orthogonal projection
onto $\ker(X_{1}+X_{2}-\alpha1)$, then there exist unique $\alpha_{1},\alpha_{2}\in\mathbb{R}$
satsifying $\alpha=\alpha_{1}+\alpha_{2}$ such that (using the notation
above) $p=p_{1}\wedge p_{2}$ and $\tau(p)=\tau(p_{1})+\tau(p_{2})-1$.
We consider the analogous question in the case in which $X_{1}$ and
$X_{2}$ are freely independent over an algebra $\mathcal{B}\subset\mathcal{A}$
of `scalars' and the `eigenvalues' themselves are selfadjoint elements
of $\mathcal{B}$. Denote by $E:\mathcal{A}\to\mathcal{B}$ the trace-preserving
conditional expectation \cite[Proposition V.2.36]{take}, let $b\in\mathcal{B}$
be selfadjoint, and denote by $p$ the orthogonal projection onto
$\ker(X_{1}+X_{2}-b)$. Suppose that that $p\ne0,$ that $X_{1}$
and $X_{2}$ are freely independent with respect to $E$, and $E(p)$
is invertible. Then there exist unique selfadjoint elements $b_{1},b_{2}\in\mathcal{B}$
such that $b=b_{1}+b_{2}$ and $\ker(X_{1}-b_{1})\ne\{0\}\ne\ker(X_{2}-b_{2})$.
Moreover, if $p_{j}$ denotes the orthogonal projection onto $\ker(X_{j}-b_{j})$,
then $p=p_{1}\wedge p_{2}$ and $\tau(p)=\tau(p_{1})+\tau(p_{2})-1$. 

Similar results are true when $E(\ker(X-b))$ is only supposed to
have closed range, and this latter situation always applies if $\mathcal{B}$
is finite dimensional. This has consequences for variables that are
freely independent with respect to $\tau.$ Suppose that $X_{1}$
and $X_{2}$ are independent relative to $\tau$ and that $p$ is
a selfadjoint polynomial in two noncommutative indeterminates. Then
there exist $n\in\mathbb{N}$ and selfadjoint $n\times n$ scalar
matrices $a_{1},a_{2},b$ such that $\ker(p(x,y))\ne\{0\}$ if and
only if $\ker(a_{1}\otimes X_{1}+a_{2}\otimes X_{2}-b\otimes1_{\mathcal{A}})\ne\{0\}$.
Moreover, the variables $a_{1}\otimes X_{1}$ and $a_{2}\otimes X_{2}$
are freely independent over $M_{n}(\mathbb{C})\otimes1_{\mathcal{A}}$,
thus reducing the question about a polynomial to an equivalent one
concerning a sum \cite{shl-sk,msy,ch-shl,msw,banna-mai}. 

Our results are also proved for variables $X_{1}$ and $X_{2}$ that
are possibly unbounded but affiliated with $\mathcal{A}$. Some of
the material below is developed for $\mathcal{B}$-valued variables
in the absence of a trace. The most precise results do however require
a trace.

Earlier results in this vein were obtained in \cite{belinschi}. Of
course, these results show that atoms rarely occur for free convolutions.
Conditions under which no atoms occur at all were obtained earlier
\cite{shl-sk,msy,ch-shl,msw,banna-mai}. These works often deduce
the lack of atoms for $p(x,y)$ from strong regularity hypotheses
on $x$ and $y$ and do not always require free independence.

\section{Random variables and their distributions\label{sec:Random-variables-and-distr}}

We work in the context of $W^{*}$ operator valued probability space.
Such a space, denoted $(\mathcal{A},E,\mathcal{B})$, consists of
a von Neumann algebra $\mathcal{A}$, a von Neumann subalgebra $\mathcal{B}\subset\mathcal{A}$
that contains the unit of $\mathcal{A}$, and a faithful conditional
expectation $E:\mathcal{A}\to\mathcal{B}$ that we always assume to
be continuous relative to the $\sigma$-weak and $\sigma$-strong
topologies. When needed, $\mathcal{A}$ is supposed to act on a Hilbert
space $\mathcal{H}$ such that the $\sigma$-weak and $\sigma$-strong
topologies on $\mathcal{A}$ are induced by the weak operator and
strong operator topologies on $\mathcal{B}(\mathcal{H})$, respectively.
A \emph{random variable} in this probability space is a (possibly
unbounded) selfadjoint operator $X$ such that $(i1_{\mathcal{A}}-X)^{-1}$
belongs to $\mathcal{A}$. We denote by $\widetilde{\mathcal{A}}_{{\rm sa}}$
the collection of all such operators, and we denote by $\widetilde{A}$
the collection of formal sums of the form $X+iY$, where $X,Y\in\widetilde{\mathcal{A}}_{{\rm sa}}$. 

Given a random variable $X\in\widetilde{\mathcal{A}}_{{\rm sa}}$,
we denote by $\mathcal{B}\langle X\rangle$ the smallest von Neumann
subalgebra of $\mathcal{A}$ that contains $\mathcal{B}$ and $(i1_{\mathcal{A}}-X)^{-1}$.
Two random variables $X_{1},X_{2}\in\widetilde{\mathcal{A}}_{{\rm sa}}$
are said to have the same \emph{$\mathcal{B}$-distribution} if there
exists a $*$-algebra isomorphism $\Phi:\mathcal{B}\langle X_{1}\rangle\to\mathcal{B}\langle X_{2}\rangle$
such that $\Phi((i1_{\mathcal{A}}-X_{1})^{-1})=(i1_{\mathcal{A}}-X_{2})^{-1}$,
$E(\Phi(Y))=E(Y)$ for every $Y\in\mathcal{B}\langle X_{1}\rangle$,
and $\Phi(b)=b$ for every $b\in\mathcal{B}$. The $\mathcal{B}$-distribution
of a variable $X\in\widetilde{\mathcal{A}}_{{\rm sa}}$ is simply
its class relative to this equivalence relation. Naturally, it is
desirable to find more concrete objects related to $\mathcal{B}$
that determine entirely the $\mathcal{B}$-distribution of a random
variable. If $X$ commutes with $\mathcal{B}$, one may use the $\mathcal{B}$-valued
Cauchy transform defined by
\[
G_{X}(z)=E((z1_{\mathcal{A}}-X)^{-1}),\quad z\in\mathbb{C}\backslash\mathbb{R}.
\]
If, in addition, $X$ is bounded, one can use the $\mathcal{B}$-valued
moments $E(X^{n})$, $n\in\mathbb{N}$. The above options are inadequate
in general. If $X$ is bounded, the noncommutative version of $G_{X}$
does determine the  $\mathcal{B}$-distribution of $X$. We recall the definition
of this noncommutative function. We denote by $\mathbb{H}^{+}(\mathcal{A})$
the collection of those elements $a\in\mathcal{A}$ that have a positive,
invertible imaginary part; we indicate this condition by writing
\[
\Im a=\frac{a-a^{*}}{2i}>0.
\]
The algebra $M_{n}(\mathbb{C})\otimes\mathcal{A}=M_{n}(\mathcal{A})$
of $n\times n$ matrices over $\mathcal{A}$ is also a von Neumann
algebra and we write $\mathbb{H}_{n}^{+}(\mathcal{A})=\mathbb{H}^{+}(M_{n}(\mathcal{A}))$.
The noncommutative version of $\mathbb{H}^{+}(\mathcal{A})$ is simply
\[
\mathbb{H}_{\bullet}^{+}(\mathcal{A})=\bigcup_{n\in\mathbb{N}}\mathbb{H}_{n}^{+}(\mathcal{A}).
\]
 We also write $\mathbb{H}_{\bullet}^{-}(\mathcal{A})=-\mathbb{H}_{\bullet}^{+}(\mathcal{A})$
and $\mathbb{H}_{\bullet}^{+}=\mathbb{H}_{\bullet}^{+}(\mathbb{C})$.
The noncommutative $\mathcal{B}$-valued Cauchy transform of a random
variable $X\in\widetilde{\mathcal{A}}_{{\rm sa}}$ is the function
$G_{X}:\mathbb{H}_{\bullet}^{+}(\mathcal{B})\to\mathbb{H}_{\bullet}^{-}(\mathcal{B})$
defined by
\[
G_{X}(z)=E_{n}((b-1_{n}\otimes X)^{-1}),\quad z\in\mathbb{H}_{n}^{+}(\mathbb{\mathcal{B}}),
\]
where $E_{n}:M_{n}(\mathcal{A})\to M_{n}(\mathcal{B})$ is the conditional
expectation obtained by applying $E$ entrywise, and $1_{n}$ is the
unit matrix in $M_{n}(\mathbb{C})$. We also use the reciprocal Cauchy
transform $F_{X}:\mathbb{H}_{\bullet}^{+}(\mathcal{B})\to\mathbb{H}_{\bullet}^{+}(\mathcal{B})$
defined by
\[
F_{X}(b)=G_{X}(b)^{-1},\quad z\in\mathbb{H}_{\bullet}^{+}(\mathcal{B}).
\]

It was pointed out in \cite{wil2} that there are unbounded variables
with different $\mathcal{B}$-distributions that have identical noncommutative
Cauchy transforms. However, the noncommutative function $G_{X}$ does
determine entirely the atoms of $X$ and even the  $\mathcal{B}$-distributions of
the corresponding kernel projections. For our purposes, an \emph{atom}
of a random variable $X\in\mathcal{\widetilde{A}}_{{\rm sa}}$ is
defined to be an element $b\in\mathcal{B}_{{\rm sa}}$ with the property
that $\ker(b-X)\ne0.$ Here, $\ker(b-X)$ is understood as the greatest
projection $p\in\mathcal{A}$ with the property that $(b-X)p=0$.
In order to see how these atoms are determined, we discuss briefly
the concept of nontangential boundary limits for functions $f:\mathbb{H}^{+}\to\mathcal{A}$.
Suppose that $t_{0}\in\mathbb{R}$ and $a_{0}\in\mathcal{A}$. We
write
\[
\varangle\lim_{z\to t_{0}}f(z)=a_{0}
\]
if for every $\varepsilon>0$ there exists $\delta>0$ such that $\|f(z)-a_{0}\|<\varepsilon$
provided that $z=x+iy\in\mathbb{H}^{+}$ satisfies $|z-t_{0}|<\delta$
and $|x-t_{0}|/y<1/\varepsilon$. Observe that for every $z\in\mathbb{H}^{+}$,
the function $h_{z}:\mathbb{R\to\mathbb{C}}$ defined by
\begin{equation}
h_{z}(t)=\frac{z}{z-t},\quad t\in\mathbb{R},\label{eq:def of h_z}
\end{equation}
satisfies
\[
\varangle\lim_{z\to0}h_{z}(t)=\chi_{\{0\}}(t)=\begin{cases}
1, & t=0,\\
0, & t\ne0,
\end{cases}
\]
and
\[
|h_{z}(t)|=\left|\frac{x+iy}{x-t+iy}\right|\le\frac{|x|+y}{y}=1+\frac{|x|}{y},\quad z=x+iy.
\]
 In other words, $|h_{z}|$ is uniformly bounded as $z\to0$ such
that $|x|/y$ remains bounded. Applying these functions to an arbitrary
random variable $X\in\widetilde{\mathcal{A}}_{{\rm sa}}$, we obtain
the following result. (The second and third equalities use the $\sigma$-strong
continuity of $E$.)
\begin{lem}
\label{lem:ker X as a limit} For every $X\in\widetilde{\mathcal{A}}_{{\rm sa}}$
we have
\[
\varangle\lim_{z\to0}z(z1_{\mathcal{A}}-X)^{-1}=\ker(X)\text{ and \ensuremath{\varangle\lim_{z\to0}zG_{X}(z1_{\mathcal{A}})=E(\ker(X))}}
\]
in the $\sigma$-strong topology. More generally, writing $u(z)=z(z1_{\mathcal{A}}-X)^{-1}$
and $p=\ker(X),$ we have
\[
E((pb_{1})(pb_{2})\cdots(pb_{n-1})p)=\varangle\lim_{z\to0}E((u(z)b_{1})(u(z)b_{2})\cdots(u(z)b_{n-1})u(z))
\]
for every $n\in\mathbb{N}$ and every $b_{1},\dots,b_{n-1}\in\mathcal{B}$.
\end{lem}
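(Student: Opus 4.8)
The plan is to realize $u(z)$ through the bounded Borel functional calculus of $X$ and then exchange the nontangential limit with the spectral integral by dominated convergence. Since $X\in\widetilde{\mathcal{A}}_{{\rm sa}}$, all bounded Borel functions of $X$ belong to $\mathcal{A}$; in particular $p=\ker(X)=\chi_{\{0\}}(X)\in\mathcal{A}$, and for every $z\in\mathbb{H}^{+}$ one has $u(z)=z(z1_{\mathcal{A}}-X)^{-1}=h_{z}(X)$, where $h_{z}$ is the function from \eqref{eq:def of h_z}. Fix $\xi\in\mathcal{H}$ and let $\mu_{\xi}$ be the finite positive measure on $\mathbb{R}$ determined by $\int g\,d\mu_{\xi}=\langle g(X)\xi,\xi\rangle$ for bounded Borel $g$. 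Then
\[
\bigl\|(u(z)-p)\xi\bigr\|^{2}=\int_{\mathbb{R}}\bigl|h_{z}(t)-\chi_{\{0\}}(t)\bigr|^{2}\,d\mu_{\xi}(t).
\]
On a nontangential region $\{|z|<\delta,\ |x|/y\le M\}$ the integrand is bounded by the constant $(2+M)^{2}$, and for each fixed $t\in\mathbb{R}$ it tends to $0$ as $z\to0$ nontangentially, by the computation of $\varangle\lim_{z\to0}h_{z}(t)$ recorded before the statement. Applying the dominated convergence theorem along an arbitrary sequence $z_{k}\to0$ with $|x_{k}|/y_{k}$ bounded, we conclude $\|(u(z)-p)\xi\|\to0$, that is, $u(z)\to p$ in the strong operator topology as $z\to0$ nontangentially.

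Because $\|u(z)\|=\|h_{z}(X)\|\le\sup_{t\in\mathbb{R}}|h_{z}(t)|\le1+|x|/y$, the family $u(z)$ stays in a fixed norm ball on every nontangential region, and on norm-bounded subsets of $\mathcal{A}$ the strong operator topology coincides with the $\sigma$-strong topology. Hence $\varangle\lim_{z\to0}u(z)=p$ in the $\sigma$-strong topology, which is the first asserted identity. Applying the $\sigma$-strongly continuous map $E$ then gives $\varangle\lim_{z\to0}zG_{X}(z1_{\mathcal{A}})=\varangle\lim_{z\to0}E(u(z))=E(p)=E(\ker(X))$, the second identity (which is also the case $n=1$ of the last one).

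For the general identity, fix $b_{1},\dots,b_{n-1}\in\mathcal{B}$. Multiplication in $\mathcal{A}$ is jointly continuous for the strong operator topology on norm-bounded subsets, and on a nontangential region each factor $u(z)b_{j}$ and $u(z)$ is uniformly bounded in norm and converges strongly to $pb_{j}$ and $p$, respectively. An induction on $n$ therefore yields
\[
\varangle\lim_{z\to0}(u(z)b_{1})(u(z)b_{2})\cdots(u(z)b_{n-1})u(z)=(pb_{1})(pb_{2})\cdots(pb_{n-1})p
\]
in the $\sigma$-strong topology, and a final application of the $\sigma$-strong continuity of $E$ completes the argument. The only step requiring genuine care is the first one, the exchange of the nontangential limit with the spectral integral; everything else is bookkeeping with the uniform bound $1+|x|/y$ and the continuity properties of $E$ and of multiplication on bounded sets.
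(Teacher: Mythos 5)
Your proof is correct and follows essentially the same route the paper intends: realize $u(z)=h_z(X)$ by functional calculus, combine the pointwise nontangential convergence and the uniform bound of $h_z$ already recorded in the text with the spectral theorem to get strong (hence $\sigma$-strong, by norm-boundedness) convergence $u(z)\to p$, and then invoke the $\sigma$-strong continuity of $E$ and the joint $\sigma$-strong continuity of multiplication on bounded sets. The paper leaves all of this implicit; you have merely filled in the details correctly.
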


Since the right hand side in the last equality can be written in terms
of the noncommutative function $G_{X}$, we see that all the moments
of $p$, and hence its  $\mathcal{B}$-distribution, are determined by $G_{X}$. The
above observation, applied to the variables $X-b$, $b\in\mathcal{B}_{{\rm sa}}$,
shows that the distribution of $\ker(X-b)$ is entirely determined
by $G_{X}$.

Later, we require a slight technical variation of Lemma \ref{lem:ker X as a limit}. 
\begin{lem}
\label{lem:convergence to ker(X) along omega}Suppose that $X\in\widetilde{\mathcal{A}}_{{\rm sa}}$
and that $f:\mathbb{R}_{+}\to\mathbb{H}^{+}(\mathcal{A})$ is such
that
\[
\lim_{y\downarrow0}\frac{f(y)}{iy}=1_{\mathcal{A}}
\]
in the $\sigma$-strong topology and $y\|(f(y)-X)^{-1}\|$ is bounded
for $y$ close to $0$. Then
\[
\lim_{y\downarrow0}iy(f(y)-X)^{-1}=\ker(X)
\]
 in the $\sigma$-strong topology.
\end{lem}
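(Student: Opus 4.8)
The plan is to set $p=\ker(X)$ and $R(y)=iy(f(y)-X)^{-1}$, and to prove directly that $R(y)\to p$ strongly. First I would record two preliminaries. Since $\Im(f(y)-X)=\Im f(y)>0$, the operator $(f(y)-X)^{-1}$ is everywhere defined and bounded, so $R(y)$ makes sense and the hypothesis says exactly that $\|R(y)\|\le M$ for some $M$ and all small $y$. Also, $p$ is the spectral projection of $X$ at $0$, so $Xp=0$, $\|p\|\le 1$, and $\overline{\operatorname{ran}(X)}=(\ker X)^{\perp}=(1_{\mathcal A}-p)\mathcal H$. Because the family $(R(y))$ is norm-bounded and $\|p\|\le1$, it suffices to prove $R(y)\xi\to p\xi$ for $\xi$ in the dense subspace $p\mathcal H+\operatorname{ran}(X)$ of $\mathcal H$, after which the convergence extends to all of $\mathcal H$ by an $\varepsilon/3$ argument; and on a norm-bounded set, strong operator convergence coincides with $\sigma$-strong convergence, so this yields the assertion in the stated topology. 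Throughout I may use that $\delta(y):=f(y)/(iy)-1_{\mathcal A}$ satisfies $\delta(y)v\to0$ in norm for every fixed $v\in\mathcal H$ (which is implied by the $\sigma$-strong hypothesis), and that $f(y)=iy(1_{\mathcal A}+\delta(y))$.

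On $p\mathcal H$: from $Xp=0$ we get $(f(y)-X)p=f(y)p$, hence $(f(y)-X)^{-1}f(y)p=p$, that is $p=R(y)(1_{\mathcal A}+\delta(y))p=R(y)p+R(y)\delta(y)p$. For a fixed $\xi$ we have $\|R(y)\delta(y)p\xi\|\le M\|\delta(y)(p\xi)\|\to0$, so $R(y)p\xi=p\xi-R(y)\delta(y)p\xi\to p\xi$.

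On $\operatorname{ran}(X)$: for $\eta\in\operatorname{dom}(X)$ write $X\eta=-(f(y)-X)\eta+f(y)\eta$, so that $(f(y)-X)^{-1}X\eta=-\eta+(f(y)-X)^{-1}f(y)\eta$. Multiplying by $iy$ and using $iy\,f(y)=(iy)^{2}(1_{\mathcal A}+\delta(y))=-y^{2}(1_{\mathcal A}+\delta(y))$ gives
\[
R(y)X\eta=-iy\eta-y\bigl(y(f(y)-X)^{-1}\bigr)(1_{\mathcal A}+\delta(y))\eta .
\]
Now $\|y(f(y)-X)^{-1}\|\le M$ and $\|(1_{\mathcal A}+\delta(y))\eta\|\le\|\eta\|+\|\delta(y)\eta\|$ stays bounded for small $y$, so both terms on the right tend to $0$; since $X\eta\in\operatorname{ran}(X)\subseteq(\ker X)^{\perp}$ we have $p(X\eta)=0$, hence again $R(y)X\eta\to p(X\eta)$. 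Together with the previous case and linearity, this verifies $R(y)\to p$ strongly on the dense subspace, finishing the proof.

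The main thing that requires care is that $\delta(y)$ is controlled only in the strong sense and possibly not in norm, so one cannot estimate $\delta(y)$ applied to the moving vectors $R(y)\xi$. The argument is therefore arranged so that $\delta(y)$ is applied only to the fixed vectors $p\xi$ and $\eta$; that rearrangement, together with the bookkeeping that converts $\|y(f(y)-X)^{-1}\|\le M$ into the decay of the error terms, is essentially the whole content.
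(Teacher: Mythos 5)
Your proof is correct, but it takes a genuinely different route from the paper's. The paper argues by comparison: it uses the resolvent-type identity
\[
iy(f(y)-X)^{-1}-iy(iy-X)^{-1}=\bigl[iy(f(y)-X)^{-1}\bigr]\Bigl[1_{\mathcal A}-\tfrac{f(y)}{iy}\Bigr]\bigl[iy(iy-X)^{-1}\bigr]
\]
and observes that the first factor is bounded by hypothesis, the middle factor tends $\sigma$-strongly to zero, and the last factor tends $\sigma$-strongly to $\ker(X)$ by Lemma~\ref{lem:ker X as a limit}; so the difference vanishes and the limit is read off from that lemma. Your argument is self-contained: you decompose $\mathcal H=\ker X\oplus\overline{\operatorname{ran}X}$, verify convergence separately on $\ker X$ and on the dense subspace $\operatorname{ran}X$, and then invoke uniform boundedness of $R(y)$ together with the equality of strong and $\sigma$-strong topologies on norm-bounded sets. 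This buys you two things: you do not need Lemma~\ref{lem:ker X as a limit} as a black box (your argument would prove it as the special case $f(y)=iy1_{\mathcal A}$), and, as you note yourself, by always applying $\delta(y)=f(y)/(iy)-1_{\mathcal A}$ to \emph{fixed} vectors you avoid having to invoke the uniform boundedness principle to control $\|\delta(y)\|$, a point that the paper's product argument leaves implicit (the middle factor is applied to the moving vectors $iy(iy-X)^{-1}\xi$). The paper's version is shorter because it reuses existing machinery; yours is more elementary and transparent about where each hypothesis enters.
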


\begin{proof}
By Lemma \ref{lem:ker X as a limit}, it suffices to show that the
difference 
\[
iy(f(y)-X)^{-1}-iy(iy-X)^{-1}
\]
 converges $\sigma$-strongly to zero as $y\downarrow0$. This difference
can be rewritten as
\[
\left[iy(f(y)-X)^{-1}\right]\left[1_{\mathcal{A}}-\frac{f(y)}{iy}\right]\left[iy(iy-X)^{-1}\right].
\]
The lemma follows because, as $y\downarrow0$, the first factor remains
bounded, the middle factor converges $\sigma$-strongly to zero, and
the third factor converges $\sigma$-strongly to $\ker(X)$.
\end{proof}
Some information about $F_{X}(iy1_{\mathcal{A}})$ can be obtained
when $y\in\mathbb{R}_{+}$. Observe that the functions defined by
(\ref{eq:def of h_z}) satisfy
\[
\Re h_{iy}(t)=\frac{y^{2}}{y^{2}+t^{2}}\ge\chi_{\{0\}}(t),\quad t\in\mathbb{R}.
\]
We conclude that
\begin{equation}
\Re(iyG_{X}(iy1_{\mathcal{A}}))\ge E(\ker(X)),\quad y>0.\label{eq:Re iyG(iy) bounded below}
\end{equation}

\begin{lem}
\label{lem:iyF(iy) tends to inv of E(p)} Suppose that $X\in\widetilde{\mathcal{A}}_{{\rm sa}}$,
that $b\in\mathcal{B}_{{\rm sa}}$, and that $E(p)$ is invertible,
where $p=\ker(X-b)$. Then
\[
\lim_{y\downarrow0}\frac{1}{iy}F_{X}(b+iy1_{\mathcal{A}})=E(p)^{-1}
\]
in the $\sigma$-strong topology.
\end{lem}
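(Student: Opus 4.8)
The plan is to normalize so that $b=0$, identify $\frac{1}{iy}F_{X}(b+iy1_{\mathcal{A}})$ with the inverse of $iyG_{X}(iy1_{\mathcal{A}})$, invoke Lemma~\ref{lem:ker X as a limit} to get $\sigma$-strong convergence of $iyG_{X}(iy1_{\mathcal{A}})$ to $E(p)$, and then pass to inverses. The only genuine difficulty is that $\sigma$-strong convergence does not by itself pass to inverses; this is repaired using the real-part estimate (\ref{eq:Re iyG(iy) bounded below}) to produce a uniform norm bound.

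First I would replace $X$ by $Y=X-b$. Clearly $Y\in\widetilde{\mathcal{A}}_{{\rm sa}}$ with $\ker(Y)=p$, and since $b+iy1_{\mathcal{A}}-X=iy1_{\mathcal{A}}-Y$ as operators, we get $G_{X}(b+iy1_{\mathcal{A}})=G_{Y}(iy1_{\mathcal{A}})$ and hence $F_{X}(b+iy1_{\mathcal{A}})=F_{Y}(iy1_{\mathcal{A}})$. Thus it suffices to treat the case $b=0$, $p=\ker(X)$. Setting $g(y)=iyG_{X}(iy1_{\mathcal{A}})\in\mathcal{B}$ for $y>0$, and using that $G_{X}$ takes values in the invertible elements (its range lies in $\mathbb{H}^{-}_{\bullet}(\mathcal{B})$), I would write
\[
\frac{1}{iy}F_{X}(iy1_{\mathcal{A}})=\frac{1}{iy}G_{X}(iy1_{\mathcal{A}})^{-1}=\big(iyG_{X}(iy1_{\mathcal{A}})\big)^{-1}=g(y)^{-1},
\]
so the claim becomes $g(y)^{-1}\to E(p)^{-1}$ $\sigma$-strongly as $y\downarrow0$.

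The key step is the uniform bound, and I expect this to be the main obstacle. By (\ref{eq:Re iyG(iy) bounded below}) we have $\Re g(y)\ge E(\ker(X))=E(p)$ for every $y>0$. Since $p$ is a projection and $E$ is positive and unital, $E(p)$ is a positive element of $\mathcal{B}$, and it is invertible by hypothesis, so $E(p)\ge c1_{\mathcal{B}}$ with $c=\|E(p)^{-1}\|^{-1}>0$; hence $\Re g(y)\ge c1_{\mathcal{B}}$ for all $y>0$. For any $w\in\mathcal{B}$ with $\Re w\ge c1_{\mathcal{B}}$ and any unit vector $\xi$ one has $\|w\xi\|\ge|\langle w\xi,\xi\rangle|\ge\langle(\Re w)\xi,\xi\rangle\ge c$, and the same bound holds with $w^{*}$ in place of $w$ (as $\Re w^{*}=\Re w$); therefore $w$ is bounded below with dense range, hence invertible with $\|w^{-1}\|\le 1/c$. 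Applying this to $w=g(y)$ gives that $g(y)$ is invertible and $\|g(y)^{-1}\|\le\|E(p)^{-1}\|$ uniformly in $y>0$.

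Finally I would combine this with convergence. Specializing Lemma~\ref{lem:ker X as a limit} to the path $z=iy$ (which lies in every region $|x|/y<1/\varepsilon$), we get $g(y)\to E(p)$ $\sigma$-strongly as $y\downarrow0$. Then from
\[
g(y)^{-1}-E(p)^{-1}=g(y)^{-1}\big(E(p)-g(y)\big)E(p)^{-1}
\]
I conclude: the middle-and-right factor $(E(p)-g(y))E(p)^{-1}$ tends $\sigma$-strongly to $0$ because right multiplication by the fixed bounded operator $E(p)^{-1}$ is $\sigma$-strongly continuous, and left multiplication by the \emph{uniformly bounded} family $g(y)^{-1}$ preserves $\sigma$-strong convergence to $0$ (for $(\xi_{n})$ with $\sum_{n}\|\xi_{n}\|^{2}<\infty$ one has $\sum_{n}\|g(y)^{-1}\eta(y)\xi_{n}\|^{2}\le\|E(p)^{-1}\|^{2}\sum_{n}\|\eta(y)\xi_{n}\|^{2}$). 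Hence $g(y)^{-1}\to E(p)^{-1}$ $\sigma$-strongly, which is the assertion. The essential point, and the reason (\ref{eq:Re iyG(iy) bounded below}) rather than bare $\sigma$-strong convergence is invoked, is precisely the uniform invertibility of $g(y)$.
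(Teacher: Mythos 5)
Your proof is correct and follows essentially the same route as the paper's: reduce to $b=0$, obtain the uniform bound on $\|g(y)^{-1}\|$ from the real-part estimate (\ref{eq:Re iyG(iy) bounded below}) together with the invertibility of $E(p)$, appeal to Lemma~\ref{lem:ker X as a limit} for the $\sigma$-strong convergence $g(y)\to E(p)$, and pass to inverses via the resolvent-type identity $g(y)^{-1}-E(p)^{-1}=g(y)^{-1}\bigl(E(p)-g(y)\bigr)E(p)^{-1}$. The only difference is that you spell out the elementary steps (why $\Re w\ge c1$ forces $\|w^{-1}\|\le 1/c$, and why $\sigma$-strong convergence is preserved under left multiplication by a uniformly bounded family) that the paper leaves implicit.
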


\begin{proof}
Since $F_{X}(b+iy1_{\mathcal{A}})=F_{X-b}(iy1_{\mathcal{A}})$, it
suffices to prove the lemma for $b=0$. In this case, the hypothesis
and (\ref{eq:Re iyG(iy) bounded below}) imply the existence of $\delta>0$
such that $\Re(iyG_{X}(iy1_{\mathcal{A}}))\ge\delta1_{\mathcal{A}},$
and hence $\|F_{X}(iy1_{\mathcal{A}})\|/y\le1/\delta,$ 
for every $y>0$. Now, if a sequence $\{a_{n}\}_{n\in\mathbb{N}}\subset\mathcal{A}$
of invertible elements converge $\sigma$-strongly to an invertible
element $a$, and if $\sup_{n}\|a_{n}^{-1}\|<+\infty$, then $\{a_{n}^{-1}\}_{n\in\mathbb{N}}$
converges $\sigma$-strongly to $a^{-1}.$ This is easily seen from
the identity $a_{n}^{-1}-a^{-1}=a_{n}^{-1}(a-a_{n})a^{-1}$. Thus
the lemma follows because $\lim_{y\downarrow0}iyG_{X}(iy1_{\mathcal{A}})=E(p)$
according to Lemma \ref{lem:ker X as a limit}.
\end{proof}
There is a version of the preceding result that applies to the case
in which $E(p)$ has closed range, that is, if $0$ is an isolated
point in the spectrum of $E(p)$. Denote by $q$ the support projection
of $E(p)$, that is, $q=1_{\mathcal{A}}-\ker(E(p))$. Then $q\mathcal{A}q$
is a von Neumann algebra, $q\mathcal{B}q$ is a unital von Neumann
subalgebra of $q\mathcal{A}q$, and the map $E_{q}:a\to qE(a)q$,
$a\in q\mathcal{A}q$, is a faithful $\sigma$-strongly continuous
conditional expectation from $q\mathcal{A}q$ to $q\mathcal{B}q$.
If one of the following conditions is satisfied:
\begin{itemize}
\item [(a)]$X\in\mathcal{A}$, or
\item [(b)]$E$ preserves a faithful normal trace state on $\mathcal A$,
\end{itemize}
then we also have $qXq\in\widetilde{q\mathcal{A}q}_{{\rm sa}}.$
\begin{cor}
\label{cor:FqXq if E(p) has closed range}Let $X\in\widetilde{\mathcal{A}}_{{\rm sa}}$,
and let $b\in\mathcal{B}_{{\rm sa}}$, be such that $E(p)\ne0$ has
closed range, where $p=\ker(X-b)$. Set $q=1_{\mathcal{A}}-\ker(p)$
so $E_{q}(p)=qE(p)q$ is invertible in $q\mathcal{B}q$. Suppose that
$qXq\in\widetilde{q\mathcal{A}q}_{{\rm sa}}$ and set $p'=\ker(qXq-qbq)$.
Then $p'\ge p$, $E_{q}(p')>0$, and
\[
\lim_{y\downarrow0}\frac{1}{iy}F_{qXq}(qbq+iyq)=E_{q}(p')^{-1}
\]
in the $\sigma$-strong topology.
\end{cor}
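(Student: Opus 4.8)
The plan is to transport Lemma~\ref{lem:iyF(iy) tends to inv of E(p)} to the compressed $W^{*}$ operator-valued probability space $(q\mathcal{A}q,E_{q},q\mathcal{B}q)$, whose structural properties were recorded just before the statement; here $q\in\mathcal{B}$ is the support projection of $E(p)$, so that $q$ is the unit of both $q\mathcal{A}q$ and $q\mathcal{B}q$ and $qbq\in(q\mathcal{B}q)_{{\rm sa}}$. Granting that $p'=\ker(qXq-qbq)$ satisfies $E_{q}(p')>0$, that is, $E_{q}(p')$ is invertible in $q\mathcal{B}q$, the lemma applied in $(q\mathcal{A}q,E_{q},q\mathcal{B}q)$ to the random variable $qXq\in\widetilde{q\mathcal{A}q}_{{\rm sa}}$ and the selfadjoint scalar $qbq$ yields precisely
\[
\lim_{y\downarrow0}\frac{1}{iy}F_{qXq}(qbq+iyq)=E_{q}(p')^{-1}
\]
in the $\sigma$-strong topology. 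So the real task is to prove the two order relations $p'\ge p$ and $E_{q}(p')>0$.

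The first step toward $p'\ge p$ is to show $p\le q$. Let $r=1_{\mathcal{A}}-q=\ker(E(p))$ be the spectral projection of the positive element $E(p)\in\mathcal{B}$ associated with $\{0\}$. Since $r$ commutes with $E(p)$ and $rE(p)=0$, we get $E(rpr)=rE(p)r=0$; as $rpr=(pr)^{*}(pr)\ge0$ and $E$ is faithful, this forces $pr=0$, whence $p=pq=qp\le q$. Now $q(X-b)q=qXq-qbq$, and for any vector $\xi$ in the range of $p$ we have $q\xi=\xi$, which lies in the domain of $X-b$, with $(X-b)\xi=0$; therefore $(qXq-qbq)\xi=0$. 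Thus the range of $p$ is contained in $\ker(qXq-qbq)=p'$, i.e.\ $p'\ge p$. Finally, since $E_{q}(p)=qE(p)q=E(p)$ is invertible in $q\mathcal{B}q$ there is $\delta>0$ with $E(p)\ge\delta q$; applying $E$ to the inequality $p'\ge p$ and then compressing by $q$ gives $E_{q}(p')=qE(p')q\ge qE(p)q\ge\delta q$, so $E_{q}(p')$ is invertible in $q\mathcal{B}q$. Together with the first paragraph, this completes the argument.

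I expect the only step carrying genuine content to be the inclusion $p\le q$, which is where the faithfulness of $E$ enters; everything else is the bookkeeping needed to invoke Lemma~\ref{lem:iyF(iy) tends to inv of E(p)} inside the compression $q\mathcal{A}q$, together with the elementary fact that $0\le a\le c$ with $a$ invertible in a unital $C^{*}$-algebra forces $c$ invertible. One should, of course, also keep track of the fact that $qXq$ and $\ker(qXq-qbq)$ make sense for a possibly unbounded $X$, but this is exactly guaranteed by the standing hypothesis $qXq\in\widetilde{q\mathcal{A}q}_{{\rm sa}}$, available under conditions (a) or (b) stated before the corollary.
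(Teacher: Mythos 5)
Your argument is correct and follows the paper's own proof almost step for step: show $p\le q$ using faithfulness of $E$ and the bimodule property (since $q\in\mathcal{B}$), deduce $p'\ge p$ by compressing the relation $(X-b)p=0$, conclude that $E_{q}(p')\ge E_{q}(p)=E(p)$ is invertible in $q\mathcal{B}q$, and then apply Lemma~\ref{lem:iyF(iy) tends to inv of E(p)} in the compressed space $(q\mathcal{A}q,E_{q},q\mathcal{B}q)$. The only differences are cosmetic (you argue $p'\ge p$ via vectors rather than operator identities, and you spell out the positivity estimate $E_q(p')\ge\delta q$ explicitly).
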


\begin{proof}
We observe first that
\[
E((1_{\mathcal{A}}-q)p(1_{\mathcal{A}}-q))=(1_{\mathcal{A}}-q)E(p)(1_{\mathcal{A}}-q)=0,
\]
and thus $(1-q)p(1-q)=0$ because $E$ is faithful. This implies that
$(1-q)p=0,$ that is, $p\le q$. In particular, $(qXq-qbq)p=q(X-b)p=0$,
and this shows that $p'\ge p$. We also have $E_{q}(p')\ge E_{q}(p)$
and $E_{q}(p)$ is simply $E(p)$ regarded as an element of $q\mathcal{B}q$.
Thus $E_{q}(p')$ is invertible. The corollary follows now from Lemma
\ref{lem:iyF(iy) tends to inv of E(p)} applied to $qXq$ and $qbq$.
\end{proof}

\section{Freeness and subordination\label{sec:Freeness-and-subordination}}

Let $(\mathcal{A},E,\mathcal{B})$ be a von Neumann $\mathcal{B}$-valued
probablility space and let $X_{1},X_{2}\in\widetilde{\mathcal{A}}_{{\rm sa}}$
be two random variables. We recall from \cite{operator-val} that
$X_{1}$ and $X_{2}$ are said to be free with respect to $E$, or
simply $E$-free, if $E(a_{1}a_{2}\cdots a_{n})=0$ whenever $a_{j}\in\mathcal{B}\langle X_{i_{j}}\rangle$
are such that $E(a_{j})=0$ for $j=1,\dots,n$ and $i_{j}\ne i_{j+1}$
for $j=1,\dots,n-1$. The study of $E$-freeness is facilitated by
the fact that, in many important cases in which $X=X_{1}+X_{2}$ is
defined, the noncommutative Cauchy transform $G_{X}$ is analytically
subordinate to $G_{X_{1}}$ and to $G_{X_{2}}$. Results of this kind
go back to \cite{coalgebra}. We formulate first the relevant result
from \cite{be-ma-spe}. This result applies to the case in which $X_{1},X_{2}\in\mathcal{A}_{{\rm sa}}$,
that is, $X_{1}$ and $X_{2}$ are bounded, and it states the existence
of noncommutative analytic functions $\omega_{1},\omega_{2}:\mathbb{H}_{\bullet}^{+}(\mathcal{B})\to\mathbb{H}_{\bullet}^{+}(\mathcal{B})$
with the following properties:
\begin{equation}
F_{X}(z)=F_{X_{1}}(\omega_{1}(z))=F_{X_{2}}(\omega_{2}(z))=\omega_{1}(z)+\omega_{2}(z)-z\text{, \ensuremath{\quad} }z\in\mathbb{H}_{\bullet}^{+}(\mathcal{B}).\label{eq:F=00003DFcomp-omega}
\end{equation}

\begin{equation}
\Im\omega_{j}(z)\ge\Im z,\quad j=1,2,\ z\in\mathbb{H}_{\bullet}^{+}(\mathcal{B}).\label{eq:Im increases}
\end{equation}

In order to apply the subordination functions to the study of atoms,
we also require a version \cite{belinschi2,belinschi-vin} of the
Julia-Carath\'eodory theorem for noncommutative functions. We state
below the relevant parts of this result. (An interesting point is
that, while all the conclusions concern the `commutative' part $\omega|\mathbb{H}^{+}(\mathcal{B})$
of $\omega$, the proof uses the fact that $\omega$ is a noncommutative
function.)
\begin{thm}
\label{thm:Cara-Jul}Let $\omega:\mathbb{H}_{\bullet}^{+}(\mathcal{B})\to\mathbb{H}_{\bullet}^{+}(\mathcal{A})$
be an analytic noncommutative function. Suppose that there exist $b_{0},c_{0}\in\mathcal{B}$
such that $b_0=b_0^*,c_{0}>0$, and the quantity $\|\Im\omega(b_{0}+iyc_{0})\|/y$
is bounded if $y\in\mathbb{R}_{+}$ is close to zero. Then:
\begin{enumerate}
\item The limit
\[
\beta=\lim_{y\downarrow0}\frac{1}{y}\Im\omega(b_{0}+iyc)
\]
exists in the $\sigma$-strong topology for every $c\in\mathcal{B}$,
$c>0$, and it is strictly positive.
\item The limit
\[
b=\lim_{y\downarrow0}\omega(b_{0}+iyc)
\]
exists in the norm topology for every $c\in\mathcal{B}$, $c>0$,
it is independent of $c$, and it is selfadjoint.
\item We have 
\[
\lim_{y\downarrow0}\frac{1}{y}(\Re\omega(b_{0}+iyc)-b)=0
\]
in the $\sigma$-strong topology for every $c\in\mathcal{B}$, $c>0$.
\end{enumerate}
\end{thm}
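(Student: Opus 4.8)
The strategy is to reduce all three assertions to elementary estimates on the operator‑valued Nevanlinna--Herglotz representation of a one‑variable restriction of $\omega$, after first propagating the boundedness hypothesis from $c_{0}$ to an arbitrary positive $c$ by a Schwarz--Pick comparison.

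Fix $c\in\mathcal B$ with $c>0$. The affine map $z\mapsto b_{0}+zc$ carries $\mathbb H^{+}$ into $\mathbb H^{+}(\mathcal B)$, so $\varphi_{c}(z):=\omega(b_{0}+zc)$ is an analytic map from $\mathbb H^{+}$ into $\mathbb H^{+}(\mathcal A)$. By the operator‑valued Herglotz representation there are a selfadjoint $\alpha_{c}\in\mathcal A$, a positive $d_{c}\in\mathcal A$, and a positive finite $\mathcal A$‑valued Borel measure $\rho_{c}$ on $\mathbb R$ with
\[
\varphi_{c}(z)=\alpha_{c}+d_{c}z+\int_{\mathbb R}\frac{1+tz}{t-z}\,d\rho_{c}(t),\qquad z\in\mathbb H^{+}.
\]
Setting $z=iy$ and separating real and imaginary parts yields
\[
\frac1y\Im\varphi_{c}(iy)=d_{c}+\int\frac{1+t^{2}}{t^{2}+y^{2}}\,d\rho_{c}(t),\qquad
\Re\varphi_{c}(iy)=\alpha_{c}+\int\frac{t(1-y^{2})}{t^{2}+y^{2}}\,d\rho_{c}(t).
\]

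Now take $c=c_{0}$. As $y\downarrow0$ the integrand $\tfrac{1+t^{2}}{t^{2}+y^{2}}$ increases to $\tfrac{1+t^{2}}{t^{2}}$, so the first identity presents $\tfrac1y\Im\varphi_{c_{0}}(iy)$ as an increasing family of positive operators; the hypothesis that it stays norm‑bounded forces $\rho_{c_{0}}(\{0\})=0$ and $\int t^{-2}\,d\rho_{c_{0}}(t)<\infty$, and makes the family converge $\sigma$‑strongly to $\beta_{c_{0}}:=d_{c_{0}}+\int\tfrac{1+t^{2}}{t^{2}}\,d\rho_{c_{0}}(t)$ --- this is (1) for $c_{0}$. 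Since $\tfrac{1+t^{2}}{t^{2}}\geq1$ we get $\beta_{c_{0}}\geq d_{c_{0}}+\rho_{c_{0}}(\mathbb R)=\Im\varphi_{c_{0}}(i)>0$, so $\beta_{c_{0}}$ is strictly positive. Once $\int t^{-2}\,d\rho_{c_{0}}<\infty$ is known, the two remaining integrals admit fixed integrable majorants (built from $1$ and $t^{-2}$), and dominated convergence gives $\varphi_{c_{0}}(iy)\to b_{c_{0}}:=\alpha_{c_{0}}+\int t^{-1}\,d\rho_{c_{0}}(t)$ in norm, with $b_{c_{0}}^{*}=b_{c_{0}}$ (this is (2)), and $\tfrac1y(\Re\varphi_{c_{0}}(iy)-b_{c_{0}})=-y\int\tfrac{1+t^{2}}{t(t^{2}+y^{2})}\,d\rho_{c_{0}}(t)\to0$ $\sigma$‑strongly (this is (3)).

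It remains to drop the restriction $c=c_{0}$ and to see that the limit in (2) does not depend on $c$. Fix $c>0$. Since $c$ and $c_{0}$ are positive and invertible there is $\varepsilon>0$ with $(1-s)c_{0}+sc>0$ for all real $s\in(-\varepsilon,1+\varepsilon)$, so for each $y>0$ and each unit vector $\xi\in\mathcal H$ the function
\[
\zeta\longmapsto\langle\omega(b_{0}+iy((1-\zeta)c_{0}+\zeta c))\xi,\xi\rangle
\]
is holomorphic from the strip $\{\Re\zeta\in(-\varepsilon,1+\varepsilon)\}$ into $\mathbb H^{+}$, with values $\langle\omega(b_{0}+iyc_{0})\xi,\xi\rangle$ at $\zeta=0$ and $\langle\omega(b_{0}+iyc)\xi,\xi\rangle$ at $\zeta=1$. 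Schwarz--Pick on the strip bounds the hyperbolic distance of these two values by a constant $\kappa=\kappa(\varepsilon)$ not depending on $y$ or $\xi$, and the elementary geometry of $\mathbb H^{+}$ then gives, with $\theta=\tanh\kappa<1$,
\[
\Im\langle\omega(b_{0}+iyc)\xi,\xi\rangle\leq r(\theta)\Im\langle\omega(b_{0}+iyc_{0})\xi,\xi\rangle,\quad
|\langle(\omega(b_{0}+iyc)-\omega(b_{0}+iyc_{0}))\xi,\xi\rangle|\leq C(\theta)\|\Im\omega(b_{0}+iyc_{0})\|^{1/2}\|\Im\omega(b_{0}+iyc)\|^{1/2}.
\]
Taking the supremum over $\xi$ in the first inequality gives $\Im\omega(b_{0}+iyc)\leq r(\theta)\Im\omega(b_{0}+iyc_{0})$, so $\|\tfrac1y\Im\omega(b_{0}+iyc)\|$ is bounded; hence the hypothesis holds for $c$ and the two previous paragraphs prove (1)--(3) for every positive $c$. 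Moreover $\|\Im\omega(b_{0}+iyc_{0})\|=y\|\tfrac1y\Im\omega(b_{0}+iyc_{0})\|\to0$ by the hypothesis, so both imaginary parts tend to $0$ in norm and the second inequality forces $\|\omega(b_{0}+iyc)-\omega(b_{0}+iyc_{0})\|\to0$; therefore $b:=\lim_{y\downarrow0}\omega(b_{0}+iyc)=b_{c_{0}}$ for all $c$.

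The nonroutine ingredients are the operator‑valued Herglotz representation used on the slices and, above all, the Schwarz--Pick transfer. I expect the latter to be the real difficulty: one must arrange the holomorphic interpolation between $b_{0}+iyc_{0}$ and $b_{0}+iyc$ so that the hyperbolic length it uses does not blow up as $y\downarrow0$ (which is why it runs in the ``$c$‑direction'' rather than along rays), and one must pass from scalar inequalities on numerical ranges back to genuine operator inequalities; the rest is the Herglotz bookkeeping of the third paragraph. In the source argument the noncommutative character of $\omega$ is what underwrites the sharpest forms of these two ingredients, even though all three conclusions speak only about $\omega|\mathbb H^{+}(\mathcal B)$.
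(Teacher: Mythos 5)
Your argument is correct, but it is \emph{not} the route the paper takes: the paper does not prove Theorem~\ref{thm:Cara-Jul} at all, it cites the result from Belinschi's noncommutative Julia--Wolff--Carath\'eodory paper and from Belinschi--Vinnikov, and the authors even flag (in the parenthetical remark preceding the statement) that the cited proof uses the noncommutative structure of $\omega$, i.e.\ the behaviour of the matrix amplifications $\omega|_{\mathbb{H}_{n}^{+}(\mathcal{B})}$ for $n>1$. Your proof never leaves the first level. You slice $\omega$ to the one-variable Pick functions $z\mapsto\omega(b_{0}+zc)$, invoke the operator-valued Nevanlinna--Herglotz representation on each slice, and transfer the Julia boundedness hypothesis from $c_{0}$ to an arbitrary $c>0$ by a Schwarz--Pick comparison for the scalar maps $\zeta\mapsto\langle\omega(b_{0}+iy((1-\zeta)c_{0}+\zeta c))\xi,\xi\rangle$ on a strip. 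That transfer is the genuinely new step and it does work: the strip parameter $\varepsilon$, and hence the hyperbolic bound $\kappa$, depends only on $c_{0},c$ and not on $y$ or $\xi$; the resulting scalar inequality upgrades to the operator inequality $\Im\omega(b_{0}+iyc)\le r(\kappa)\,\Im\omega(b_{0}+iyc_{0})$ because both sides are selfadjoint, and the bound on the numerical range of the difference upgrades to a norm bound since the numerical radius dominates a fixed multiple of the operator norm. After that the Herglotz bookkeeping (monotone convergence for $\tfrac1y\Im$, the forced vanishing $\rho_{c}(\{0\})=0$ together with $\int t^{-2}\,d\rho_{c}<\infty$, dominated convergence for $\Re$) gives (1)--(3), and independence of $b$ from $c$ in (2) follows from the Schwarz--Pick norm estimate vanishing as $y\downarrow0$. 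What your approach buys is elementarity: it proves the statement for any analytic map $\omega:\mathbb{H}^{+}(\mathcal{B})\to\mathbb{H}^{+}(\mathcal{A})$, with no noncommutative hypothesis. What the cited noncommutative machinery buys is uniform boundary regularity at all matrix levels, which those papers need elsewhere but which the three level-1 assertions quoted here do not require. Two details worth writing out in a full version: that $\rho_{c}$ takes values in $\mathcal{A}$ (Stieltjes inversion plus WOT-closedness of $\mathcal{A}$), and, for (3), that splitting the integral into its $t>0$ and $t<0$ parts lets WOT-to-$0$ of each positive piece upgrade to $\sigma$-strong convergence.
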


With these tools in hand, we can analyze the consequences of $E(\ker(X-b))>0$.
\begin{thm}
\label{thm:main, p>0, no trace}Let $(\mathcal{A},E,\mathcal{B})$
be an operator valued von Neumann probability space, let $X_{1},X_{2}\in\mathcal{A}_{{\rm sa}}$
be two $E$-free random variables, and denote $X=X_{1}+X_{2}$. Suppose
that $b\in\mathcal{B}_{{\rm sa}}$ is such that $E(p)>0$, where $p=\ker(X-b)$.
Then there exist $b_{1},b_{2},\beta_{1},\beta_{2}\in\mathcal{B}_{{\rm sa}}$
with the following properties:
\end{thm}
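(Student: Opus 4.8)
The plan is to exploit the subordination relations (\ref{eq:F=00003DFcomp-omega}) together with the noncommutative Julia--Carathéodory theorem (Theorem \ref{thm:Cara-Jul}). First I would specialize the subordination to the path $z = b + iy1_{\mathcal A}$, $y\downarrow 0$, and use Lemma \ref{lem:iyF(iy) tends to inv of E(p)}: since $E(p)>0$, we have $\frac{1}{iy}F_X(b+iy1_{\mathcal A})\to E(p)^{-1}$ in the $\sigma$-strong topology, so in particular $\|\Im F_X(b+iy1_{\mathcal A})\|/y$ stays bounded. From $F_X(z) = \omega_1(z)+\omega_2(z)-z$ and the fact that $\Im\omega_j(z)\ge \Im z$ (inequality (\ref{eq:Im increases})), I would deduce that $\|\Im\omega_j(b+iy1_{\mathcal A})\|/y$ is bounded for each $j=1,2$ as $y\downarrow 0$. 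This is exactly the hypothesis needed to apply Theorem \ref{thm:Cara-Jul} with $b_0=b$, $c_0 = 1_{\mathcal B}$, to each subordination function $\omega_j$.

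Next I would invoke the three conclusions of Theorem \ref{thm:Cara-Jul}. Conclusion (2) gives selfadjoint elements $b_j = \lim_{y\downarrow 0}\omega_j(b+iy1_{\mathcal A})\in\mathcal{B}_{\mathrm sa}$, and conclusion (1) gives strictly positive elements $\beta_j = \lim_{y\downarrow 0}\frac{1}{y}\Im\omega_j(b+iy1_{\mathcal A})\in\mathcal{B}_{\mathrm sa}$; these are the four elements named in the statement. Writing $\omega_j(b+iy1_{\mathcal A}) = b_j + iy\beta_j + o(y)$ (using conclusions (2) and (3) for the real part and (1) for the imaginary part, all in the $\sigma$-strong topology), and comparing imaginary parts in the identity $F_X = \omega_1+\omega_2 - z$ at order $y$, I expect to get the relation $E(p)^{-1} = \beta_1+\beta_2 - 1_{\mathcal B}$ (equivalently $\beta_1+\beta_2 = 1_{\mathcal B}+E(p)^{-1}$), while comparing the $\sigma$-strong limits of the real parts forces $b = b_1 + b_2$. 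These should be among the listed properties.

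The more substantive point is to identify $b_1,b_2$ as \emph{atoms} of $X_1,X_2$, i.e.\ to show $\ker(X_1-b_1)\ne 0$ and $\ker(X_2-b_2)\ne 0$, and to compute $E$ of these kernel projections. Here I would feed the path $\omega_j(b+iy1_{\mathcal A})$, which by construction tends $\sigma$-strongly to $b_j$ with $\omega_j(b+iy1_{\mathcal A})/iy \to \beta_j$-ish normalization, into a Julia--Carathéodory/Lemma \ref{lem:convergence to ker(X) along omega}-type argument applied to $X_j$. Concretely, from $F_{X_j}(\omega_j(z)) = F_X(z)$ one gets $G_{X_j}(\omega_j(b+iy1_{\mathcal A})) = G_X(b+iy1_{\mathcal A})$, and multiplying by $iy$ and letting $y\downarrow 0$, the right side tends to $E(p)$ by Lemma \ref{lem:ker X as a limit}; on the left, rewriting $iy\,G_{X_j}(\omega_j) = \big(\tfrac{iy}{i\,\text{(something)}}\big)\cdot i(\cdots)G_{X_j}(\cdots)$ and using that $\Im\omega_j(b+iy1_{\mathcal A})$ behaves like $y\beta_j$ with $\beta_j>0$, one should be able to apply Lemma \ref{lem:convergence to ker(X) along omega} (with $f(y) = \omega_j(b+iy1_{\mathcal A})$, suitably rescaled) to obtain $E_n$-limits expressible through $\ker(X_j - b_j)$; in particular $p_j := \ker(X_j-b_j)$ is nonzero and $E(p_j) = \beta_j^{1/2} E(p)\,\beta_j^{1/2}$ or a similar conjugation formula, which in turn pins down $\beta_j$. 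This last identification of the kernel projections and the precise $E(p_j)$ formula is where the real work lies; the earlier parts ($b=b_1+b_2$, existence of $\beta_j>0$, the relation among the $\beta_j$) are fairly mechanical consequences of Theorem \ref{thm:Cara-Jul} and the subordination identity.
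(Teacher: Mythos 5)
Your plan matches the paper's proof step for step: Lemma~\ref{lem:iyF(iy) tends to inv of E(p)} and the subordination identity~(\ref{eq:F=00003DFcomp-omega}) (together with~(\ref{eq:Im increases}) to separate the two terms) give the boundedness hypothesis of Theorem~\ref{thm:Cara-Jul}, from which $b_j$ and $\beta_j$ emerge, and then Lemma~\ref{lem:convergence to ker(X) along omega} applied to the rescaled path $f_j(y)=\beta_j^{-1/2}(\omega_j(b+iy1_{\mathcal A})-b_j)\beta_j^{-1/2}$ identifies the kernel projections. The one point to sharpen in your last step: what that rescaling actually produces is $E\bigl(\ker((X_j-b_j)\beta_j^{-1/2})\bigr)=\beta_j^{1/2}E(p)\beta_j^{1/2}$, not a formula for $E(\ker(X_j-b_j))$ itself; the projections $\ker(X_j-b_j)$ and $\ker\bigl((X_j-b_j)\beta_j^{-1/2}\bigr)$ are the two support projections of $\beta_j^{1/2}\ker(X_j-b_j)$, hence Murray--von Neumann equivalent (which is enough to get (iii) and, in the tracial setting, equality of traces), but their $\mathcal{B}$-valued expectations are generally different when $\mathcal{B}$ is noncommutative.
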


\begin{enumerate}
\item [(i)]$b=b_{1}+b_{2},$
\item [(ii)]$\beta_{1},\beta_{2}>0$,
\item [(iii)]$\ker(X_{j}-b_{j})\ne0$, $j=1,2$,
\item [(iv)]$E(\ker((X_{j}-b_{j})\beta_{j}^{-1/2}))=\beta_{j}^{1/2}E(p)\beta_{j}^{1/2}$,
$j=1,2,$ \emph{and}
\item [(v)]$\beta_{1}+\beta_{2}-1_{\mathcal{A}}=E(p)^{-1}.$
\end{enumerate}
\begin{proof}
Let $\omega_{1}$ and $\omega_{2}$ be the subordination functions
described earlier. Equation (\ref{eq:F=00003DFcomp-omega}) shows
that
\[
\frac{\Im\omega_{1}(b+iy1_{\mathcal{A}})}{y}+\frac{\Im\omega_{2}(b+iy1_{\mathcal{A}})}{y}=1_{\mathcal{A}}+\frac{\Im F_{X}(b+iy1_{\mathcal{A}})}{y},\quad y\in\mathbb{R}_{+}.
\]
By Lemma \ref{lem:iyF(iy) tends to inv of E(p)}, the right hand side
remains bounded as $y\downarrow0$, and thus Theorem \ref{thm:Cara-Jul}
shows that the norm limits
\[
b_{j}=\lim_{y\downarrow0}\omega_{j}(b+iy1_{\mathcal{A}}),\quad j=1,2,
\]
and the strictly positive $\sigma$-strong limits
\[
\beta_{j}=\lim_{y\downarrow0}\frac{\Im\omega_{j}(b+iy1_{\mathcal{A}})}{y},\quad j=1,2,
\]
exist and, in addition,
\begin{equation}
\beta_{j}=\lim_{y\downarrow0}\frac{\omega_{j}(b+iy1_{\mathcal{A}})-b_{j}}{iy},\quad j=1,2.\label{eq:beta j is strong limit}
\end{equation}
Next, we use the subordination relation to see that
\begin{equation}
iyG_{X}(b+iy1_{\mathcal{A}})=iyG_{X_{j}}(\omega_{j}(b+iy1_{\mathcal{A}})),\quad j=1,2,\ y\in\mathbb{R}_{+}.\label{eq:iyG(iy)}
\end{equation}
Define
\[
f_{j}(y)=\beta_{j}^{-1/2}(\omega_{j}(b+iy1_{\mathcal{A}})-b_{j})\beta_{j}^{-1/2}\quad j=1,2,\ y\in\mathbb{R}_{+},
\]
and observe that (\ref{eq:beta j is strong limit}) implies
\[
\lim_{y\downarrow0}\frac{f_{j}(y)}{iy}=1
\]
 in the $\sigma$-strong topology. Therefore, by Lemma \ref{lem:convergence to ker(X) along omega},
\begin{align*}
\lim_{y\downarrow0}iy(f_{j}(y)-\beta_{j}^{-1/2}(X_{j}-b_{j})\beta_{j}^{-1/2})^{-1} & =\ker(\beta_{j}^{-1/2}(b_{j}-X_{j})\beta_{j}^{-1/2})\\
 & =\ker((b_{j}-X_{j})\beta_{j}^{-1/2})
\end{align*}
in the $\sigma$-strong topology. Now,
\begin{align*}
\omega_{j}(b+iy1_{\mathcal{A}})-X_{j} & =\omega_{j}(b+iy1_{\mathcal{A}})-b_{j}+b_{j}-X_{j}\\
 & =\beta_{j}^{1/2}f_{j}(y)\beta_{j}^{1/2}+b_{j}-X_{j}\\
 & =\beta_{j}^{1/2}(f_{j}(y)-\beta_{j}^{-1/2}(b_{j}-X_{j})\beta_{j}^{-1/2})\beta_{j}^{1/2},
\end{align*}
so
\[
(\omega_{j}(b+iy1_{\mathcal{A}})-X_{j})^{-1}=\beta_{j}^{-1/2}(f_{j}(y)-\beta_{j}^{-1/2}(b_{j}-X_{j})\beta_{j}^{-1/2})^{-1}\beta_{j}^{-1/2}
\]
and
\begin{equation}
\lim_{y\downarrow0}iy(\omega_{j}(b+iy1_{\mathcal{A}})-X_{j})^{-1}=\beta_{j}^{-1/2}\ker((b_{j}-X_{j})\beta_{j}^{-1/2})\beta_{j}^{-1/2}\label{eq:limit to be used later}
\end{equation}
 in the $\sigma$-strong topology. Similarly,
\begin{align*}
G_{X_{j}}(\omega_{j}(b+iy1_{\mathcal{A}})) & =E((\omega_{j}(b+iy1_{\mathcal{A}})-X_{j})^{-1})\\
 & =\beta_{j}^{-1/2}E((f_{j}(y)-\beta_{j}^{-1/2}(b_{j}-X_{j})\beta_{j}^{-1/2})^{-1})\beta_{j}^{-1/2}.
\end{align*}
Taking $\sigma$-strong limits in (\ref{eq:iyG(iy)}), we obtain
\[
E(p)=\beta_{j}^{-1/2}E(\ker(b_{j}-X_{j})\beta_{j}^{-1/2})\beta_{j}^{-1/2},
\]
that is, (iv). In particular, $\ker(b_{j}-X_{j})\beta_{j}^{-1/2}\ne0$,
and this implies (iii). We observe next that
\[
\omega_{1}(b+iy1_{\mathcal{A}})+\omega_{2}(b+iy1_{\mathcal{A}})=b+iy1_{\mathcal{A}}+F_{X}(iy).
\]
The left side tends in norm to $b_{1}+b_{2}$ as $y\downarrow0$,
while
\[
F_{X}(iy)=iy\frac{F_{X}(iy)}{iy}\to0\cdot E(p)=0
\]
as $y\downarrow0$. This proves (i). Finally, since
\[
\frac{1}{iy}[\omega_{1}(b+iy1_{\mathcal{A}})-b_{1}]+\frac{1}{iy}[\omega_{2}(b+iy1_{\mathcal{A}})-b_{2}]=1_{\mathcal{A}}+\frac{1}{iy}F_{X}(iy),
\]
we obtain by letting $y\downarrow0$
\begin{equation}
\beta_{1}+\beta_{2}=1_{\mathcal{A}}+E(p)^{-1}.\label{eq:beta1+beta2}
\end{equation}
\end{proof}
\begin{rem}
Since
\[
\beta_{j}^{1/2}E(\ker((X_{j}-b_{j})\beta_{j}^{-1/2}))\beta_{j}^{-1/2}=\beta_{j}E(p),\quad j=1,2,
\]
we obtain 
\[
\beta_{1}^{1/2}E(\ker[(X_{1}-b_{1})\beta_{1}^{-1/2}])\beta_{1}^{-1/2}+\beta_{2}^{1/2}E(\ker[(X_{2}-\beta_{2})\beta_{2}^{-1/2}]\beta_{2}^{-1/2}=1_{\mathcal{A}}+E(p)
\]
 upon multiplying (\ref{eq:beta1+beta2}) on the right by $E(p)$.
A similar equation is obtained when we multiply on the left by $E(p)$
(or when we take adjoints in the above equation.)
\end{rem}

\begin{rem}
\label{rem:vN-M equivalence of ker} Since $\ker(b_{j}-X_{j})$ and
$\ker((b_{j}-X_{j})\beta_{j}^{-1/2})$ are the left and right support
projections of $\beta_{j}^{1/2}\ker(b_{j}-X_{j})$, it follows that
these two projections are Murray-von Neumann equivalent in $\mathcal{A}$.
\end{rem}

Suppose now that $\mathcal{A}$ is a von Neumann algebra, $\mathcal{B\subset\mathcal{A}}$
is a von Neumann subalgebra containing the unit of $\mathcal{A}$,
and $\tau:\mathcal{A}\to\mathbb{C}$ is a normal faithful trace state.
We denote by $E_{\mathcal{B}}:\mathcal{A}\to\mathcal{B}$ the unique
trace preserving conditional expectation, that is, $\tau\circ E_{\mathcal{B}}=\tau$,
so $(\mathcal{A},E_{\mathcal{B}},\mathcal{B})$ is an operator valued
probability space. In this context, the formal set $\widetilde{\mathcal{A}}$ introduced at the beginning of Section \ref{sec:Random-variables-and-distr}
has an algebra structure; in particular, $\widetilde{\mathcal{A}}_{{\rm sa}}$
is a vector space. Thus the addition of arbitrary random variables
in $\widetilde{\mathcal{A}}_{{\rm sa}}$ is defined. Suppose that
$X_{1},X_{2}\in\widetilde{\mathcal{A}}_{{\rm sa}}$ and set $X=X_{1}+X_{2}$.
It was shown in \cite{coalgebra,freeMark} that noncommutative functions
$\omega_{1}$ and $\omega_{2}$ satisfying (\ref{eq:F=00003DFcomp-omega})
and (\ref{eq:Im increases}) do exist. In addition, the stronger subordination
equation
\begin{equation}
E_{\mathcal{B}\langle X_{j}\rangle}((b-X)^{-1})=(\omega_{j}(b)-X_{j})^{-1},\quad j=1,2,\ b\in\mathbb{H}^{+}(\mathcal{B}),\label{eq:conditional exp subordination}
\end{equation}
holds. Theorem \ref{thm:main, p>0, no trace} can be strengthened
as follows.
\begin{thm}
\label{thm:main, p>0, with trace}Let $\mathcal{A}$ be a von Neumann
algebra with a faithful normal trace state $\tau$, let $\mathcal{B}\subset\mathcal{A}$
be a von Neumann subalgebra containing the unit of $\mathcal{A}$,
let $X_{1},X_{2}\in\widetilde{\mathcal{A}}_{{\rm sa}}$ be two $E_{\mathcal{B}}$-free
random variables, and set $X=X_{1}+X_{2}$. Suppose that $b\in\mathcal{B}_{{\rm sa}}$
is such that $E(p)>0$, where $p=\ker(X-b)$. Then there exist $b_{1},b_{2},\beta_{1},\beta_{2}\in\mathcal{B}_{{\rm sa}}$
satisfying properties \emph{(i)-(v)} of Theorem\emph{ \ref{thm:main, p>0, no trace}}
and, in addition,
\begin{enumerate}
\item [(vi)]$\ker((b_{j}-X_{j})\beta_{j}^{-1/2})=\beta_{j}^{1/2}E_{\mathcal{B}\langle X_{j}\rangle}(p)\beta_{j}^{1/2},$
and
\item [(vii)]$p=p_{1}\wedge p_{2}$ and $\tau(p_{1})+\tau(p_{2})=1+\tau(p)$,
where $p_{j}=\ker(b_{j}-X_{j}),$ $j=1,2$.
\end{enumerate}
\end{thm}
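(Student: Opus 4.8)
The plan is to reduce everything to the argument already carried out for Theorem~\ref{thm:main, p>0, no trace}, using in addition the stronger, conditional-expectation subordination identity (\ref{eq:conditional exp subordination}) available in the traceful setting. First I would note that properties (i)--(v), as well as the intermediate limit (\ref{eq:limit to be used later}), are obtained by repeating the proof of Theorem~\ref{thm:main, p>0, no trace} word for word: that proof uses only the existence of noncommutative subordination functions $\omega_{1},\omega_{2}$ satisfying (\ref{eq:F=00003DFcomp-omega}) and (\ref{eq:Im increases}), Lemmas~\ref{lem:ker X as a limit}, \ref{lem:convergence to ker(X) along omega}, \ref{lem:iyF(iy) tends to inv of E(p)}, and Theorem~\ref{thm:Cara-Jul}, all of which are valid here for possibly unbounded $X_{1},X_{2}$ affiliated with $\mathcal{A}$ (the existence of $\omega_{1},\omega_{2}$ in this setting being recalled just before the statement). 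Hence the new content is (vi) and (vii).

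For (vi), I would substitute $b+iy1_{\mathcal{A}}$ for $b$ in (\ref{eq:conditional exp subordination}) (valid because $\Im(b+iy1_{\mathcal{A}})=y1_{\mathcal{A}}>0$) and multiply by the scalar $iy$, getting $E_{\mathcal{B}\langle X_{j}\rangle}\bigl(iy(b+iy1_{\mathcal{A}}-X)^{-1}\bigr)=iy(\omega_{j}(b+iy1_{\mathcal{A}})-X_{j})^{-1}$ for $y>0$. As $y\downarrow0$ the operators $iy(b+iy1_{\mathcal{A}}-X)^{-1}$ have norm at most $1$ and tend $\sigma$-strongly to $p$ by Lemma~\ref{lem:ker X as a limit} applied to $X-b$; since the conditional expectation $E_{\mathcal{B}\langle X_{j}\rangle}$ is normal, and therefore (via the Kadison--Schwarz inequality) $\sigma$-strongly continuous on bounded sets, the left side tends $\sigma$-strongly to $E_{\mathcal{B}\langle X_{j}\rangle}(p)$, while by (\ref{eq:limit to be used later}) the right side tends $\sigma$-strongly to $\beta_{j}^{-1/2}\ker((b_{j}-X_{j})\beta_{j}^{-1/2})\beta_{j}^{-1/2}$. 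Equating the two limits and conjugating by $\beta_{j}^{1/2}$ (legitimate since $\beta_{j}>0$) yields (vi).

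For (vii) I would proceed in three steps. First, if a vector is annihilated by both $X_{j}-b_{j}$, then it lies in the intersection of their domains, hence in the domain of the operator sum $X_{1}+X_{2}$ and of its closure $X$, and it is annihilated by $X-(b_{1}+b_{2})$, which equals $X-b$ by (i); therefore $\ker(X_{1}-b_{1})\cap\ker(X_{2}-b_{2})\subseteq\ker(X-b)$, i.e.\ $p_{1}\wedge p_{2}\le p$. Second, by Remark~\ref{rem:vN-M equivalence of ker} and (vi), $\tau(p_{j})=\tau(\ker((b_{j}-X_{j})\beta_{j}^{-1/2}))=\tau(\beta_{j}^{1/2}E_{\mathcal{B}\langle X_{j}\rangle}(p)\beta_{j}^{1/2})=\tau(\beta_{j}E_{\mathcal{B}\langle X_{j}\rangle}(p))=\tau(E_{\mathcal{B}\langle X_{j}\rangle}(\beta_{j}p))=\tau(\beta_{j}p)$, the last equalities because $\beta_{j}\in\mathcal{B}\subset\mathcal{B}\langle X_{j}\rangle$ and $\tau\circ E_{\mathcal{B}\langle X_{j}\rangle}=\tau$; summing over $j$ and using (v) together with $\tau\circ E=\tau$ and $E(p)^{-1}\in\mathcal{B}$, $\tau(p_{1})+\tau(p_{2})=\tau((\beta_{1}+\beta_{2})p)=\tau(p)+\tau(E(p)^{-1}p)=\tau(p)+\tau(E(p)^{-1}E(p))=1+\tau(p)$. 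Third, the parallelogram identity $\tau(p_{1}\wedge p_{2})+\tau(p_{1}\vee p_{2})=\tau(p_{1})+\tau(p_{2})$ and $\tau(p_{1}\vee p_{2})\le1$ give $\tau(p_{1}\wedge p_{2})\ge\tau(p)$; together with $p_{1}\wedge p_{2}\le p$ and the faithfulness of $\tau$ this forces $p=p_{1}\wedge p_{2}$, and the trace identity just proved is the remaining assertion of (vii).

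The step I expect to be the main obstacle is making (vi) fully rigorous: one must check that the proof of Theorem~\ref{thm:main, p>0, no trace}, written for bounded variables, goes through unchanged for affiliated operators, and one must justify the $\sigma$-strong continuity of $E_{\mathcal{B}\langle X_{j}\rangle}$ on the bounded family $\{iy(b+iy1_{\mathcal{A}}-X)^{-1}\}_{y>0}$. The latter reduces, via the Kadison--Schwarz inequality for the unital completely positive map $E_{\mathcal{B}\langle X_{j}\rangle}$, to the inequality $\omega\bigl((E_{\mathcal{B}\langle X_{j}\rangle}a)^{*}E_{\mathcal{B}\langle X_{j}\rangle}a\bigr)\le(\omega\circ E_{\mathcal{B}\langle X_{j}\rangle})(a^{*}a)$ for every normal state $\omega$, which preserves $\sigma$-strong convergence. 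Everything else is bookkeeping with the trace.
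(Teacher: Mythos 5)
Your proof is correct and follows essentially the same route as the paper: deduce (vi) by multiplying the stronger subordination identity~(\ref{eq:conditional exp subordination}) by $iy$ and letting $y\downarrow 0$, using~(\ref{eq:limit to be used later}), and deduce the trace identity in (vii) by applying $\tau$ to the relation between $\beta_j$, $E(p)$ and the kernel projections, then upgrading $\tau(p)\le\tau(p_1\wedge p_2)\le\tau(p)$ to $p=p_1\wedge p_2$ via faithfulness. The only cosmetic difference is that you obtain $\tau(p_j)=\tau(\beta_j p)$ from (vi) and the module property of $E_{\mathcal{B}\langle X_j\rangle}$, whereas the paper reaches the same sum by applying $\tau$ to the (unnumbered) remark following Theorem~\ref{thm:main, p>0, no trace}; you have also supplied some details the paper elides, namely the $\sigma$-strong continuity on bounded sets of the normal conditional expectation $E_{\mathcal{B}\langle X_j\rangle}$ and the inclusion $p_1\wedge p_2\le p$.
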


\begin{proof}
Using the notation in the proof of Theorem \ref{thm:main, p>0, no trace},
the equation
\[
iyE_{\mathcal{B}\langle X_{j}\rangle}((b+iy1_{\mathcal{A}}-X)^{-1})=iy(\omega_{j}(b+iy1_{\mathcal{A}})-X_{j})^{-1}
\]
and (\ref{eq:limit to be used later}) yield (vi) as $y\downarrow0$.
To prove (vii), observe that
\[
\tau(\beta_{j}^{1/2}E(\ker((X_{j}-b_{j})\beta_{j}^{-1/2}))\beta_{j}^{-1/2})=\tau(E(\ker((X_{j}-b_{j})\beta_{j}^{-1/2}))).
\]
Remark \ref{rem:vN-M equivalence of ker} implies 
\[
\tau(E(\ker((X_{j}-b_{j})\beta_{j}^{-1/2})))=\tau(p_{j}).
\]
Thus the equality $\tau(p_{1})+\tau(p_{2})=1+\tau(p)$ follows by
applying $\tau$ in Remark \ref{rem:vN-M equivalence of ker}. Finally,
we certainly have $p\ge p_{1}\wedge p_{2}$ and
\[
\tau(p)=\tau(p_{1})+\tau(p_{2})-1\le\tau(p_{1}\wedge p_{2}).
\]
We conclude that $p=p_{1}\wedge p_{2}$ because $\tau$ is faithful.
This concludes the proof.
\end{proof}

\section{Matrix valued random variables\label{sec:Matrix-valued-random-vars}}

Let $\mathcal{A}$ be a von Neumann algebra endowed with a faithful,
normal trace state $\tau$ and let $n\in\mathbb{N}$. The algebra
$M_{n}(\mathcal{A})=M_{n}(\mathbb C)\otimes\mathcal{A}$ is also a von Neumann
algebra and the map $\tau_{n}:M_{n}(\mathcal{A})\to\mathbb{C}$ defined
by
\[
\tau_{n}(a)=\frac{1}{n}\sum_{j=1}^{n}\tau(a_{jj}),\quad a=[a_{ij}]_{i,j=1}^{n}\in M_{n}(\mathcal{A}),
\]
is faithful, normal trace state. The trace-preserving conditional
expectation $E_{n}:M_{n}(\mathbb C)\otimes\mathcal{A}\to M_{n}(\mathbb C)\otimes1_{\mathcal{A}}$
is given by
\[
E_{n}(a)=[\tau(a_{ij})]_{i,j=1}^{n},\quad a=[a_{ij}]_{i,j=1}^{n}\in M_{n}(\mathcal{A}),
\]
and therefore we have
\[
\tau_{n}={\rm tr}_{n}\circ E_{n},
\]
where ${\rm tr}_{n}$ denotes the normalized trace on $M_{n}(\mathbb C)\otimes1_{\mathcal{A}}.$

The relevance of this example comes from the following fact.
\begin{prop}
\label{prop:freeness over M_n}Suppose that the variables $X_{1},X_{2}\in\widetilde{\mathcal{A}}_{{\rm sa}}$
are $\tau$-free and that $a_{1},a_{2}\in M_{n}(\mathbb C)$ are selfadjoint
matrices. Then $a_{1}\otimes X_{1}$ and $a_{2}\otimes X_{2}$ are
$E_{n}$-free.
\end{prop}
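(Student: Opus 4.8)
The plan is to deduce the $E_{n}$-freeness of $a_{1}\otimes X_{1}$ and $a_{2}\otimes X_{2}$ from the (classical) fact that matrix amplifications of $\tau$-free von Neumann subalgebras are free over the scalar matrices, together with the elementary observation that subalgebras containing the scalars inherit freeness. Concretely, writing $\mathcal{A}_{i}=\mathbb{C}\langle X_{i}\rangle$, I would first show the inclusion
\[
M_{n}(\mathbb{C})\langle a_{i}\otimes X_{i}\rangle\subseteq M_{n}(\mathbb{C})\otimes\mathcal{A}_{i}=M_{n}(\mathcal{A}_{i}),\qquad i=1,2 .
\]
To prove this, diagonalize $a_{i}=U_{i}^{*}D_{i}U_{i}$ with $U_{i}\in M_{n}(\mathbb{C})$ unitary and $D_{i}=\mathrm{diag}(\lambda_{1}^{(i)},\dots,\lambda_{n}^{(i)})$ real. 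Conjugating by the unitary $U_{i}\otimes 1_{\mathcal{A}}\in M_{n}(\mathbb{C})\otimes 1_{\mathcal{A}}$ turns $i(1_{n}\otimes 1_{\mathcal{A}})-a_{i}\otimes X_{i}$ into the block-diagonal matrix whose $k$-th diagonal entry is $i1_{\mathcal{A}}-\lambda_{k}^{(i)}X_{i}$; this is invertible with inverse in $\mathcal{A}_{i}$, being $(\lambda_{k}^{(i)})^{-1}$ times a resolvent of $X_{i}$ at the non-real point $i/\lambda_{k}^{(i)}$ when $\lambda_{k}^{(i)}\ne 0$, and equal to $-i1_{\mathcal{A}}$ when $\lambda_{k}^{(i)}=0$. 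Hence $(i(1_{n}\otimes 1_{\mathcal{A}})-a_{i}\otimes X_{i})^{-1}\in M_{n}(\mathcal{A}_{i})\subseteq M_{n}(\mathcal{A})$; in particular $a_{i}\otimes X_{i}$ is a selfadjoint random variable in $\widetilde{M_{n}(\mathcal{A})}_{{\rm sa}}$, so $M_{n}(\mathbb{C})\langle a_{i}\otimes X_{i}\rangle$ is defined, and since it is generated by this resolvent together with $M_{n}(\mathbb{C})\otimes 1_{\mathcal{A}}$, the displayed inclusion holds.

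Next I would carry out the main step: if $\mathcal{A}_{1},\mathcal{A}_{2}\subseteq\mathcal{A}$ are von Neumann subalgebras free with respect to $\tau$, then $M_{n}(\mathcal{A}_{1})$ and $M_{n}(\mathcal{A}_{2})$ are free over $M_{n}(\mathbb{C})\otimes 1_{\mathcal{A}}$ with respect to $E_{n}$. Take $c_{1},\dots,c_{k}$ with $c_{j}\in M_{n}(\mathcal{A}_{i_{j}})$, $E_{n}(c_{j})=0$, and $i_{j}\ne i_{j+1}$. Since $E_{n}(c_{j})=[\tau((c_{j})_{pq})]_{p,q}$, the hypothesis means every entry $(c_{j})_{pq}$ is a $\tau$-centered element of $\mathcal{A}_{i_{j}}$. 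Expanding entrywise,
\[
(c_{1}c_{2}\cdots c_{k})_{pq}=\sum_{r_{1},\dots,r_{k-1}=1}^{n}(c_{1})_{p,r_{1}}(c_{2})_{r_{1},r_{2}}\cdots(c_{k})_{r_{k-1},q},
\]
each summand is an alternating product of $\tau$-centered elements drawn from $\mathcal{A}_{1}$ and $\mathcal{A}_{2}$, hence killed by $\tau$ by freeness of $\mathcal{A}_{1},\mathcal{A}_{2}$; therefore $\tau((c_{1}\cdots c_{k})_{pq})=0$ for all $p,q$, i.e. $E_{n}(c_{1}\cdots c_{k})=0$. I would apply this with $\mathcal{A}_{i}=\mathbb{C}\langle X_{i}\rangle$, which is legitimate precisely because the $\tau$-freeness of $X_{1}$ and $X_{2}$ is, by definition, the $\tau$-freeness of $\mathbb{C}\langle X_{1}\rangle$ and $\mathbb{C}\langle X_{2}\rangle$.

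Finally, freeness over $M_{n}(\mathbb{C})\otimes 1_{\mathcal{A}}$ with respect to $E_{n}$ passes to any pair of von Neumann subalgebras containing $M_{n}(\mathbb{C})\otimes 1_{\mathcal{A}}$, since the defining vanishing condition for the smaller algebras is a special case of that for the larger ones. By the first step, $M_{n}(\mathbb{C})\langle a_{i}\otimes X_{i}\rangle$ is such a subalgebra of $M_{n}(\mathbb{C}\langle X_{i}\rangle)$, so $a_{1}\otimes X_{1}$ and $a_{2}\otimes X_{2}$ are $E_{n}$-free. The only genuinely technical point is the bookkeeping in the first step when $X_{i}$ is unbounded, namely verifying that $a_{i}\otimes X_{i}$ is an honest random variable and that its resolvent remains in the matrix amplification of $\mathbb{C}\langle X_{i}\rangle$; but the spectral decomposition of the finite matrix $a_{i}$ reduces this to the single-variable resolvent identity used above, so I do not expect a real obstacle.
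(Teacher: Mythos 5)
The paper states Proposition \ref{prop:freeness over M_n} without proof, treating it as a standard fact, so there is no in-paper argument to compare against. Your proof is correct and supplies exactly the expected argument: reduce to the inclusion $M_{n}(\mathbb{C})\langle a_{i}\otimes X_{i}\rangle\subseteq M_{n}(\mathbb{C}\langle X_{i}\rangle)$ (via diagonalization of $a_{i}$, which handles the unbounded case cleanly since the conjugated resolvent has block-diagonal entries that are scalar multiples of resolvents of $X_{i}$ at non-real points, hence lie in $\mathbb{C}\langle X_{i}\rangle$), then prove freeness of the full matrix amplifications $M_{n}(\mathbb{C}\langle X_{1}\rangle)$, $M_{n}(\mathbb{C}\langle X_{2}\rangle)$ over $M_{n}(\mathbb{C})\otimes 1_{\mathcal{A}}$ by the entrywise-expansion argument, and finally observe that $E_{n}$-freeness passes down to subalgebras containing $M_{n}(\mathbb{C})\otimes 1_{\mathcal{A}}$. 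All three steps are carried out correctly; in particular you are right that $E_{n}(c_{j})=0$ is equivalent to every entry $(c_{j})_{pq}$ being $\tau$-centered, so each summand in the entrywise expansion of $(c_{1}\cdots c_{k})_{pq}$ is an alternating product of centered elements of $\mathbb{C}\langle X_{1}\rangle$ and $\mathbb{C}\langle X_{2}\rangle$, killed by $\tau$-freeness.
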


Suppose now that $X\in\widetilde{\mathcal{A}}_{{\rm sa}}$ is a random
variable, $n\in\mathbb{N},$ and $a,b\in M_{n}(\mathbb C)$ are selfadjoint matrices.
We wish to describe the kernel of the random variable
\[
b\otimes1_{\mathcal{A}}-a\otimes X\in\widetilde{M_{n}(\mathcal{A})}_{{\rm sa}}.
\]
To do this, it is convenient to view the von Neumann algebra $M_{n}(\mathbb C)\langle a\otimes X\rangle$
(that is, the algebra generated by $M_{n}(\mathbb{C})\otimes1_{\mathcal{A}}$
and $a\otimes X$) as an algebra of operators on a concrete Hilbert
space. Denote by $\mu$ the distribution of $X$, that is, $\mu$
is the Borel measure on $\mathbb{R}$ defined by
\[
\mu(\sigma)=\tau(e_{X}(\sigma))
\]
for every Borel set $\sigma\subset\mathbb{R},$ where $e_{X}$ denotes
the spectral measure of $X$. Set 
\[
\mathcal{H}=\mathbb{C}^{n}\otimes L^{2}(\mu),
\]
and let $1_{n}\otimes X$ and $M_{n}(\mathbb C)\otimes1_{\mathcal{A}}$ act on
$f\in\mathcal{H}$ via the formulas
\[
((1_{n}\otimes X)f)(t)=tf(t),\quad((b\otimes1_{\mathcal{A}})f)(t)=bf(t),\quad b\in M_{n}(\mathbb C),t\in\mathbb{R}.
\]
Here, the elements of $\mathcal{H}$ are viewed as measurable functions
$f:\mathbb{R}\to\mathbb{C}^{n}$. Using this action, the algebra $M_{n}(\mathbb C)\langle a\otimes X\rangle$
is identified with the algebra consisting of all (equivalence classes
of) bounded Borel functions $h:\mathbb{R}\to M_{n}(\mathbb C)$ and
\[
(hf)(t)=h(t)f(t),\quad h\in M_{n}(\mathbb C)\langle a\otimes X\rangle,f\in\mathcal{H}.
\]
The maps $E_{n}$ and $\tau_{n}$ become simply
\[
E_{n}(h)=\int_{\mathbb{R}}h(t)\,d\mu_{X}(t)\text{ and }\tau_{n}(h)=\int_{\mathbb{R}}{\rm tr}_{n}(h(t))\,d\mu_{X}(t),\quad h\in M_{n}(\mathbb C)\langle a\otimes X\rangle.
\]

Returning to the operator $b\otimes1_{\mathcal{A}}-a\otimes X$, we
see that the equation
\[
(b\otimes1_{\mathcal{A}}-a\otimes X)f=0
\]
translates to
\[
(b-ta)f(t)=0,\quad\mu_{X}\text{-a.e.}
\]
It is well known (see, for instance, \cite[Section 2.1]{Kato}) that
\[
t\mapsto\ker(b-ta)
\]
is a Borel function from $\mathbb{R}\to M_{n}(\mathbb C)$ and that the rank
of the projection $\ker(b-ta)$ is equal to its minimum value outside
a finite set in $\mathbb{R}$. The result below follows immediately.
\begin{lem}
\label{lem:dimension of a kernel over M_n}Suppose that $\mathcal{A}$
is a von Neumann algebra with a normal, faithful trace $\tau$, let
$n\in\mathbb{N},$ let $X\in\widetilde{\mathcal{A}}_{{\rm sa}}$ be
a random variable, and let $a,b\in M_{n}(\mathbb C)$ be selfadjoint. Define
$k(t)={\rm tr}_{n}(\ker(b-ta))$, $t\in\mathbb{R}$, and let $k_{{\rm min}}=\min\{k(t):t\in\mathbb{R}\}$.
Then
\[
\tau_{n}(\ker(b\otimes1_{\mathcal{A}}-a\otimes X))=\int_{\mathbb{R}}k(t)\,d\mu_{X}(t)=k_{{\rm min}}+\sum_{t\in\mathbb{R}}(k(t)-k_{{\rm min}})\mu_{X}(\{t\}).
\]
\end{lem}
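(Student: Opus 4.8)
The plan is to read the kernel projection off the concrete model constructed just above, compute its $\tau_n$-trace as an integral against $\mu_X$, and then turn that integral into the asserted finite sum using the two properties of $t\mapsto\ker(b-ta)$ quoted in the discussion preceding the statement.

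First I would establish the first equality. Under the identification of $M_n(\mathbb C)\langle a\otimes X\rangle$ with bounded Borel functions $h:\mathbb R\to M_n(\mathbb C)$ acting pointwise on $\mathcal H=\mathbb C^n\otimes L^2(\mu_X)$, the operator $T=b\otimes1_{\mathcal A}-a\otimes X$ corresponds to the function $t\mapsto b-ta$, so (as already observed) its kernel consists exactly of those $f$ with $f(t)\in\ker(b-ta)$ for $\mu_X$-a.e.\ $t$. Since $t\mapsto\ker(b-ta)$ is a bounded Borel function by \cite[Section 2.1]{Kato}, the kernel projection $p=\ker(b\otimes1_{\mathcal A}-a\otimes X)$ is precisely the element of $M_n(\mathbb C)\langle a\otimes X\rangle$ represented by this function; the displayed formula for $\tau_n$ then gives
\[
\tau_n(p)=\int_{\mathbb R}{\rm tr}_n\bigl(\ker(b-ta)\bigr)\,d\mu_X(t)=\int_{\mathbb R}k(t)\,d\mu_X(t).
\]

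Next I would deal with the second equality, where the finiteness part of the quoted result enters. Writing $k(t)=\frac1n{\rm rank}\,\ker(b-ta)$, the assertion from \cite[Section 2.1]{Kato} that this rank is constant outside a finite set says exactly that $k(t)=k_{\min}$ for all $t$ outside some finite set $F\subset\mathbb R$. Hence the nonnegative function $k-k_{\min}$ is supported on $F$, and, because $\mu_X$ is a probability measure, $\int_{\mathbb R}k\,d\mu_X=k_{\min}\mu_X(\mathbb R)+\int_F(k-k_{\min})\,d\mu_X=k_{\min}+\sum_{t\in F}(k(t)-k_{\min})\mu_X(\{t\})$; since the summand vanishes for $t\notin F$, the sum may be taken over all of $\mathbb R$, and the proof is complete.

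The argument is essentially immediate given the concrete model and the quoted properties, so I do not expect a genuine obstacle. The one point I would treat with a little care is the identification of $p$ with the pointwise function $t\mapsto\ker(b-ta)$: here one should verify that every $f$ with $f(t)\in\ker(b-ta)$ a.e.\ automatically lies in the domain of the (possibly unbounded) operator $a\otimes X$, which holds because for such $f$ the function $(a\otimes X)f:t\mapsto ta\,f(t)=b\,f(t)$ is square-integrable, so that $p$ genuinely belongs to the algebra to which the trace-as-integral formula applies.
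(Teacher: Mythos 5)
Your proof is correct and follows precisely the route the paper intends: the lemma is stated as following immediately from the concrete functional model just constructed, the trace-as-integral formula $\tau_n(h)=\int{\rm tr}_n(h(t))\,d\mu_X(t)$, and the finiteness observation from Kato, and you have simply written out those three steps. The extra care you take over the domain of $a\otimes X$ is a reasonable sanity check, though in the trace setting it is also handled automatically by the fact that $\widetilde{M_n(\mathcal{A})}_{\rm sa}$ is closed under the relevant algebraic operations.
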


The sum in the lemma above only contains finitely many nonzero terms,
corresponding to those $t\in\mathbb{R}$ such that $k(t)>k_{{\rm min}}$
and at the same time $\mu_{X}(\{t\})>0$. Theorem \ref{thm:main, p>0, with trace}
yields the following result.
\begin{cor}
\label{cor:main with trace applied to B=00003DM_n}Suppose that $\mathcal{A}$
is a von Neumann algebra with a normal, faithful trace $\tau$, let
$n\in\mathbb{N},$ let $X_{1},X_{2}\in\widetilde{\mathcal{A}}_{{\rm sa}}$
be $\tau$-free random variables, and let $a_{1},a_{2},b\in M_{n}(\mathbb C)$
be selfadjoint. If $\xi=E_{n}(\ker(b\otimes1_{\mathcal{A}}-a_{1}\otimes X_{1}-a_{2}\otimes X_{2}))>0$,
then there exist $t_{1},\dots,t_{N},s_{1},\dots,s_{M}\in\mathbb{R}$
and $\ell_{0},\ell_{1},\dots,\ell_{N},m_{0},m_{1},\dots,m_{M}\in\mathbb{N}$
such that $\ell_{0}+\ell_{j}\le n$, $m_{0}+m_{i}\le n$, and
\[
n({\rm tr}_n(\xi)+1)=\ell_{0}+m_{0}+\sum_{j=1}^{N}\ell_{j}\mu_{X_{1}}(\{t_{j}\})+\sum_{i=1}^{M}m_{i}\mu_{X_{2}}(\{s_{i}\}).
\]
 If neither $\mu_{X_{1}}$ nor $\mu_{X_{2}}$ have point masses, then
$n{\rm tr}_n(\xi)$ is an integer.
\end{cor}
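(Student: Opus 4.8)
The plan is to apply Theorem~\ref{thm:main, p>0, with trace} with $\mathcal{B}=M_{n}(\mathbb{C})\otimes 1_{\mathcal{A}}$, using Proposition~\ref{prop:freeness over M_n} to guarantee that $Y_{1}=a_{1}\otimes X_{1}$ and $Y_{2}=a_{2}\otimes X_{2}$ are $E_{n}$-free, and then to feed the resulting kernel projections into Lemma~\ref{lem:dimension of a kernel over M_n} to convert the trace identity into the desired arithmetic formula. Concretely, writing $Y=Y_{1}+Y_{2}$ and $p=\ker(b\otimes1_{\mathcal{A}}-Y)$, the hypothesis $\xi=E_{n}(p)>0$ is exactly the hypothesis ``$E(p)>0$'' of Theorem~\ref{thm:main, p>0, with trace}. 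That theorem produces selfadjoint $b_{1},b_{2},\beta_{1},\beta_{2}\in M_{n}(\mathbb{C})\otimes1_{\mathcal{A}}$ — which we may identify with matrices $b_{1},b_{2},\beta_{1},\beta_{2}\in M_{n}(\mathbb{C})$ — with $b=b_{1}+b_{2}$, $\beta_{1},\beta_{2}>0$, $\beta_{1}+\beta_{2}=1_{n}+\xi^{-1}$, and projections $p_{j}=\ker(b_{j}\otimes1_{\mathcal{A}}-a_{j}\otimes X_{j})\neq0$ satisfying $p=p_{1}\wedge p_{2}$ and $\tau_{n}(p_{1})+\tau_{n}(p_{2})=1+\tau_{n}(p)$.

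\textbf{Next} I would apply $\tau_{n}$ to the identity $\beta_{1}+\beta_{2}=1_{n}+\xi^{-1}$. Since $\beta_{1},\beta_{2}$ are scalar matrices, $\tau_{n}(\beta_{j})={\rm tr}_{n}(\beta_{j})$, and we likewise have $\tau_{n}(\xi^{-1})={\rm tr}_{n}(\xi^{-1})$. But I should not need $\xi^{-1}$ at all: the cleaner route is to use property~(vii) directly, namely $\tau_{n}(p_{1})+\tau_{n}(p_{2})=1+\tau_{n}(p)$, equivalently $n\,{\rm tr}_{n}(\xi)+n = n\,\tau_{n}(p_{1})+n\,\tau_{n}(p_{2})$ after noting $\tau_{n}(p)={\rm tr}_{n}(E_{n}(p))={\rm tr}_{n}(\xi)$. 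Now each $p_{j}=\ker\bigl(b_{j}\otimes1_{\mathcal{A}}-a_{j}\otimes X_{j}\bigr)$ is precisely of the form treated in Lemma~\ref{lem:dimension of a kernel over M_n}, so
\[
\tau_{n}(p_{1})=k^{(1)}_{\min}+\sum_{t\in\mathbb{R}}\bigl(k^{(1)}(t)-k^{(1)}_{\min}\bigr)\mu_{X_{1}}(\{t\}),
\]
where $k^{(1)}(t)={\rm tr}_{n}(\ker(b_{1}-t a_{1}))$ and $k^{(1)}_{\min}$ is its minimum value, and similarly for $p_{2}$ with data $a_{2},b_{2}$ and $X_{2}$. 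The sums are finite; let $t_{1},\dots,t_{N}$ enumerate those $t$ with $k^{(1)}(t)>k^{(1)}_{\min}$ and $\mu_{X_{1}}(\{t\})>0$, and set $\ell_{j}=n(k^{(1)}(t_{j})-k^{(1)}_{\min})$ and $\ell_{0}=n\,k^{(1)}_{\min}$; these are nonnegative integers since $n\,{\rm tr}_{n}$ of a scalar projection is its rank. The rank bounds $\ell_{0}+\ell_{j}=n\,k^{(1)}(t_{j})\le n$ and $m_{0}+m_{i}\le n$ are immediate because each $k(t)\le1$. Multiplying the identity $n\,{\rm tr}_{n}(\xi)+n=n\tau_{n}(p_{1})+n\tau_{n}(p_{2})$ through by $n$... rather, keeping track: $n(\tau_n(p_1)+\tau_n(p_2)) = n(1+{\rm tr}_n(\xi))$, and substituting the two Lemma~\ref{lem:dimension of a kernel over M_n} expansions gives exactly the claimed formula with $\ell_{0},m_{0}$ absorbing the two $k_{\min}$ terms.

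\textbf{For the final assertion}, if neither $\mu_{X_{1}}$ nor $\mu_{X_{2}}$ has point masses then all the sums in the two expansions vanish, so $n(\,{\rm tr}_{n}(\xi)+1)=\ell_{0}+m_{0}\in\mathbb{Z}$, whence $n\,{\rm tr}_{n}(\xi)=\ell_{0}+m_{0}-n$ is an integer.

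\textbf{The main obstacle} I anticipate is the bookkeeping needed to see that Theorem~\ref{thm:main, p>0, with trace} genuinely applies: one must check that $Y_{1},Y_{2}$ lie in $\widetilde{M_{n}(\mathcal{A})}_{\rm sa}$ (clear, since $a_{j}$ is a bounded scalar matrix and $X_{j}\in\widetilde{\mathcal{A}}_{\rm sa}$), that the relevant conditional expectation on $M_{n}(\mathcal{A})$ with values in $M_{n}(\mathbb{C})\otimes1_{\mathcal{A}}$ is $E_{n}$ and is trace-preserving for $\tau_{n}$ (established in Section~\ref{sec:Matrix-valued-random-vars}), and that the elements $b_{j},\beta_{j}\in\mathcal{B}=M_{n}(\mathbb{C})\otimes1_{\mathcal{A}}$ really are constant matrices so that Lemma~\ref{lem:dimension of a kernel over M_n} is applicable to $b_{j}\otimes1_{\mathcal{A}}-a_{j}\otimes X_{j}$ — this is automatic because $\mathcal{B}=M_{n}(\mathbb{C})\otimes1_{\mathcal{A}}$ consists exactly of such matrices. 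A secondary subtlety is confirming that $p_{j}\neq0$ does not force $k^{(j)}(t_{j})>k^{(j)}_{\min}$ at some atom of $X_{j}$: indeed $p_{j}\neq0$ only says $\tau_{n}(p_{j})>0$, which is consistent with $k^{(j)}_{\min}>0$ and no atoms contributing, so no contradiction arises and the enumeration may legitimately be empty. Beyond these checks the argument is a direct substitution.
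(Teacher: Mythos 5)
Your proof is correct and follows essentially the same route as the paper's: apply Proposition~\ref{prop:freeness over M_n} to obtain $E_n$-freeness of $a_1\otimes X_1$ and $a_2\otimes X_2$, invoke Theorem~\ref{thm:main, p>0, with trace} (specifically conclusion~(vii)) to get $\tau_n(p_1)+\tau_n(p_2)=1+\tau_n(p)$ with $p_j=\ker(b_j\otimes1_{\mathcal A}-a_j\otimes X_j)$, and expand each $\tau_n(p_j)$ via Lemma~\ref{lem:dimension of a kernel over M_n}. Your "main obstacle" paragraph correctly verifies the minor points that the paper leaves implicit (that $\mathcal B=M_n(\mathbb C)\otimes1_{\mathcal A}$ makes the $b_j$ scalar matrices, and that an empty enumeration is allowed).
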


\begin{proof}
As noted above, the variables $a_{1}\otimes X_{1},a_{2}\otimes X_{2}\in\widetilde{M_{n}(\mathcal{A})}$
are $E_{n}$-free and thus the conclusions of Theorem \ref{thm:main, p>0, with trace}
apply to them. Thus, there exist selfadjoint elements $b_{1}\otimes1_{\mathcal{A}},b_{2}\otimes1_{\mathcal{A}}$
such that
\[
1+{\rm tr}_n(\xi)=\tau_{n}(p_{1})+\tau_{n}(p_{2}),
\]
where $p_{j}=\ker(b_{j}\otimes1_{\mathcal{A}}-a_{j}\otimes X_{j})$,
$j=1,2$. Setting 
\begin{align*}
k_{j}(t) & ={\rm tr}_{n}(\ker(b_{j}-ta_{j})),\quad t\in\mathbb{R},\\
\ell_{0} & =n\min\{k
_{1}(t):t\in\mathbb{R}\},\\
m_{0} & =n\min\{k_{2}(t):t\in\mathbb{R}\},\\
\ell(t) & =nk_{1}(t)-\ell_{0},\\
m(t) & =nk_{2}(t)-m_{0},
\end{align*}
we conclude that 
\[
n(1+{\rm tr}_n(\xi))=\ell_{0}+m_{0}+\sum_{t\in\mathbb{R}}(\ell(t)\mu_{X_{1}}(\{t\})+m(t)\mu_{X_{2}}(\{t\})).
\]
If neither $\mu_{X_{1}}$ nor $\mu_{X_{2}}$ has any point masses,
the second sum vanishes, and thus $n{\rm tr}_n(\xi)=\ell_{0}+m_{0}-n$ is an integer.
The corollary follows.
\end{proof}
The ordinary eigenvalues of an arbitrary polynomial $P(X_{1},X_{2})$
in two random variables can be studied using the matrix eigenvalues
of an expression of the form $a_{1}\otimes X_{1}+a_{2}\otimes X_{2}$.
This is achieved by the process of linerization that we now describe
briefly. Suppose that $P(Z_{1},Z_{2})$ is a complex polynomial in
two noncommuting indeterminates and let $a_{0},a_{1},a_{2}\in M_{n}(\mathbb C)$
for some $n\in\mathbb{N}.$ We say that the expression
\[
L(Z_{1},Z_{2})=a_{0}\otimes1+a_{1}\otimes Z_{1}+a_{2}\otimes Z_{2}
\]
is a linearization of $P(Z_{1},Z_{2})$ if, given elements $z_{1},z_{2}$
in some complex unital algebra $\mathcal{A}$ and $\lambda\in\mathbb{C}$,
then the element $P(z_{1},z_{2})$ is invertible in $\mathcal{A}$
if and only if $L(z_{1},z_{2})$ 
is invertible in $M_{n}(\mathcal{A})$. It is known that every polynomial
has a linearization. As seen in \cite{anderson}, if $P(Z_{1},Z_{2})$
is selfadjoint (relative to the involution that fixes $Z_{1}$ and
$Z_{2}$), then $a_{0},a_{1},$ and $a_{2}$ can be chosen to be selfadjoint
matrices. One way to construct a linearization is to find $m\in\mathbb{N}$
and polynomials $B(Z_{1},Z_{2}),C(Z_{1},Z_{2}),D(Z_{1},Z_{2})$, and
$D'(Z_{1},Z_{2})$ such that
\begin{enumerate}
\item [(a)]$B$ is a $1\times m$ linear polynomial,
\item [(b)]$C$ is an $m\times1$ linear polynomial,
\item [(c)]$D$ is an $m\times m$ linear polynomial,
\item [(d)]$D(Z_{1},Z_{2})D'(Z_{1},Z_{2})=D'(Z_{1},Z_{2})D(Z_{1},Z_{2})=1_{m}$,
and
\item [(e)]$B(Z_{1},Z_{2})D'(Z_{1},Z_{2})C(Z_{1},Z_{2})=P(Z_{1},Z_{2}).$
\end{enumerate}
Once such polynomials are found, 
\begin{equation}
L(Z_{1},Z_{2})=\left[\begin{array}{cc}
0 & B(Z_{1},Z_{2})\\
C(Z_{1},Z_{2}) & D(Z_{1},Z_{2})
\end{array}\right]\label{eq:linearization of(Z,Z)}
\end{equation}
is a linearization of $P$ with $n=1+m$. This linearization is selfadjoint
if $C^{*}=B$ and $D^{*}=D$. 
\begin{lem}
\label{lem:kerP(X_1,X_2) equiv to kernel of linearization}Let $\mathcal{A}$
be a von Neumann algebra with a faithful normal trace state $\tau$,
and let $A,X_{1},X_{2}\in\widetilde{\mathcal{A}}_{{\rm sa}}$ be random
variables. Suppose that $P(Z_{1},Z_{2})$ is a polynomial in two noncommuting
indeterminates and that $L(Z_{1},Z_{2})$ is a linearization of $P$
defined by \emph{(\ref{eq:linearization of(Z,Z)})}, where $B,C,D,D'$
are subject to conditions \emph{(a)\textendash (e)}. Denote by $e_{1,1}\in M_{n}(\mathbb C)$
the matrix unit whose only nonzero is in the first row and first column.
Then $\ker(A\otimes e_{1,1}+L(X_{1},X_{2}))$ is Murray-von Neumann
equivalent to $(A-\ker P(X_{1},X_{2}))\oplus(0_{n-1}\otimes1_{\mathcal{A}})$
in $M_{n}(\mathcal{A}).$ In particular,
\[
n\tau_{n}(\ker(A\otimes e_{1,1}+L(X_{1},X_{2})))=\tau(\ker(A-P(X_{1},X_{2}))).
\]
\end{lem}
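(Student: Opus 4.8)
The plan is to factor $A\otimes e_{1,1}+L(X_{1},X_{2})$ as a Schur-complement product of a block-diagonal operator with two invertible unipotent matrices, and then to invoke the principle that multiplying an affiliated operator on either side by an invertible element of the ambient $*$-algebra leaves the Murray--von Neumann class of its kernel projection unchanged. First I would fix the algebraic setting. Because $\tau$ is faithful and normal, $\widetilde{\mathcal{A}}$ is a $*$-algebra, hence so is $M_{n}(\widetilde{\mathcal{A}})=\widetilde{M_{n}(\mathcal{A})}$; every element $T$ of the latter is a closed operator affiliated with $M_{n}(\mathcal{A})$, it has a kernel projection $\ker(T)$ (the greatest projection $p$ with $Tp=0$) and a range projection (the projection onto the closure of its range), both lying in $M_{n}(\mathcal{A})$, and the polar decomposition $T=w|T|$ has $w\in M_{n}(\mathcal{A})$ a partial isometry witnessing that the right support $s_{r}(T)=w^{*}w=1-\ker(T)$ is Murray--von Neumann equivalent to the left support $s_{l}(T)=ww^{*}$, the range projection of $T$. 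Specializing the polynomial identities (d) and (e) at $(X_{1},X_{2})$ gives $D(X_{1},X_{2})D'(X_{1},X_{2})=D'(X_{1},X_{2})D(X_{1},X_{2})=1_{m}$, so $D:=D(X_{1},X_{2})$ is invertible in $M_{m}(\widetilde{\mathcal{A}})$ with $D^{-1}=D'(X_{1},X_{2})$, and $BD^{-1}C=P(X_{1},X_{2})=:P$, where $B:=B(X_{1},X_{2})$ and $C:=C(X_{1},X_{2})$. Writing $A$ also for $A\otimes 1_{\mathcal{A}}$ placed in the $(1,1)$-corner, one has, as an identity in $M_{n}(\widetilde{\mathcal{A}})$,
\[
A\otimes e_{1,1}+L(X_{1},X_{2})=\begin{bmatrix}A & B\\ C & D\end{bmatrix}=\begin{bmatrix}1 & BD^{-1}\\ 0 & 1_{m}\end{bmatrix}\begin{bmatrix}A-P & 0\\ 0 & D\end{bmatrix}\begin{bmatrix}1 & 0\\ D^{-1}C & 1_{m}\end{bmatrix};
\]
call the three factors on the right $U$, $N$, $V$. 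Here $U$ and $V$ are unipotent, hence invertible in $M_{n}(\widetilde{\mathcal{A}})$, and since $D$ is invertible a direct inspection of $N$ gives $\ker(N)=\ker(A-P)\oplus(0_{n-1}\otimes 1_{\mathcal{A}})$, a projection supported in the $(1,1)$-corner.

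It remains to check that $\ker(UNV)\sim\ker(N)$ in $M_{n}(\mathcal{A})$. Since $U$ is invertible in $M_{n}(\widetilde{\mathcal{A}})$, left multiplication by $U$ is injective, so for a projection $p$ one has $UNVp=0\iff NVp=0$, whence $\ker(UNV)=\ker(NV)$. Setting $q=\ker(N)$, the relation $NVp=0$ says exactly that the range projection of $Vp$ is dominated by $q$, i.e.\ $(1-q)Vp=0$; hence $\ker(NV)=\ker\bigl((1-q)V\bigr)$, and since $V$ is invertible an easy support computation identifies $\ker\bigl((1-q)V\bigr)$ with the range projection $s_{l}(V^{-1}q)$ of the element $V^{-1}q\in M_{n}(\widetilde{\mathcal{A}})$. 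On the other hand $V^{-1}$ is injective, so the kernel of $V^{-1}q$ is $1-q$, i.e.\ $s_{r}(V^{-1}q)=q$; as $s_{l}(\cdot)\sim s_{r}(\cdot)$ for every element of $M_{n}(\widetilde{\mathcal{A}})$, we conclude $\ker(UNV)\sim q=\ker(A-P(X_{1},X_{2}))\oplus(0_{n-1}\otimes 1_{\mathcal{A}})$, which is the first assertion. The displayed trace identity then follows immediately, since Murray--von Neumann equivalent projections have equal trace and $\tau_{n}\bigl(\ker(A-P(X_{1},X_{2}))\oplus(0_{n-1}\otimes1_{\mathcal{A}})\bigr)=\tfrac1n\,\tau\bigl(\ker(A-P(X_{1},X_{2}))\bigr)$ by the definition of $\tau_{n}$.

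The one point that needs care is that $A$, $X_{1}$, $X_{2}$ may be unbounded, so the factorization above, the cancellation of $U$, and all the support computations must be carried out inside the $*$-algebra $M_{n}(\widetilde{\mathcal{A}})$ of affiliated operators, where they are purely algebraic, rather than as literal compositions of unbounded operators with their domains; this is precisely where the faithful normal trace is used, since it is what makes $\widetilde{\mathcal{A}}$ an algebra and provides the polar decomposition for affiliated operators (in the bounded case the whole argument is elementary). It is also worth recording at the outset that $A\otimes e_{1,1}+L(X_{1},X_{2})$ is indeed an element of $M_{n}(\widetilde{\mathcal{A}})$, so that its kernel projection is meaningful, and that no selfadjointness of the linearization is needed anywhere in the argument.
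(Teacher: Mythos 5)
Your proof is correct and follows essentially the same route as the paper: both exploit the Schur-complement (LDU) factorization of the bordered matrix and then transfer the kernel across the triangular factors via injectivity and the Murray--von Neumann equivalence of left and right supports. The only organizational difference is that the paper establishes the two subequivalences $\prec$ separately from two one-sided identities (their $H$ and $K$), whereas you package the same content into one full factorization $UNV$ and obtain the equivalence in a single stroke from the polar decomposition of $V^{-1}q$; both arguments are of the same depth and rest on the same fact that $s_l(T)\sim s_r(T)$ for affiliated $T$.
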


\begin{proof}
We first observe that
\[
A\otimes e_{1,1}+L(X_{1},X_{2})=\left[\begin{array}{cc}
A & B(Z_{1},Z_{2})\\
C(Z_{1},Z_{2}) & D(Z_{1},Z_{2})
\end{array}\right],
\]
and
\[
\left[\begin{array}{cc}
\!\!1_{\mathcal{A}} &\!\!-B(X_{1},X_{2})D'\!(X_{1},X_{2})\!\!\\
0 & 1_{n-1}\otimes1_{\mathcal{A}}
\end{array}\right]\!\!(A\otimes e_{1,1}+L(X_{1},X_{2}))\!=\!\left[\begin{array}{cc}
\!\!A\!-\!P(X_{1},X_{2}) & 0\\
C(X_{1},X_{2}) & D(X_{1},X_{2})\!\!
\end{array}\right].
\]
Since the first operator on the left side is injective, we deduce
that
\[
\ker(A\otimes e_{1,1}+L(X_{1},X_{2}))=\ker\left[\begin{array}{cc}
A-P(X_{1},X_{2}) & 0\\
C(X_{1},X_{2}) & D(X_{1},X_{2})
\end{array}\right].
\]
Next, we note the identities
\[
\left[\begin{array}{cc}
A-P(X_{1},X_{2}) & 0\\
0 & 1_{n-1}\otimes1_{\mathcal{A}}
\end{array}\right]H=\left[\begin{array}{cc}
A-P(X_{1},X_{2}) & 0\\
C(X_{1},X_{2}) & D(X_{1},X_{2})
\end{array}\right]
\]
 and
\[
\left[\begin{array}{cc}
A-P(X_{1},X_{2}) & 0\\
C(X_{1},X_{2}) & D(X_{1},X_{2})
\end{array}\right]K=\left[\begin{array}{cc}
A-P(X_{1},X_{2}) & 0\\
0 & 1_{n-1}\otimes1_{\mathcal{A}}
\end{array}\right],
\]
where
\[
H=\left[\begin{array}{cc}
1_{\mathcal{A}} & 0\\
C(X_{1},X_{2}) & D(X_{1},X_{2})
\end{array}\right]
\]
and
\[
K=\left[\begin{array}{cc}
1_{\mathcal{A}} & 0\\
-D'(X_{1},X_{2})C(X_{1},X_{2}) & D'(X_{1},X_{2})
\end{array}\right]
\]
are injective operators. The first identity shows that the final space
of 
\[
H\ker\left[\begin{array}{cc}
A-P(X_{1},X_{2}) & 0\\
C(X_{1},X_{2}) & D(X_{1},X_{2})
\end{array}\right]=H\ker(A\otimes e_{1,1}+L(X_{1},X_{2}))
\]
is less than or equal to
\[
\ker\left[\begin{array}{cc}
-P(X_{1},X_{2}) & 0\\
0 & 1_{n-1}\otimes1_{\mathcal{A}}
\end{array}\right]=(\ker(A-P(X_{1},X_{2})))\oplus(0_{n-1}\otimes1_{\mathcal{A}}),
\]
and thus
\[
\ker(A\otimes e_{1,1}+L(X_{1},X_{2}))\prec(\ker(A-P(X_{1},X_{2})))\oplus(0_{n-1}\otimes1_{\mathcal{A}}).
\]
Similarly, the second equality yields
\[
(\ker(A-P(X_{1},X_{2})))\oplus(0_{n-1}\otimes1_{\mathcal{A}})\prec\ker(A\otimes e_{1,1}+L(X_{1},X_{2}))
\]
and concludes the proof.
\end{proof}
The preceding proof yields a description of $\ker(A\otimes e_{1,1}+L(X_{1},X_{2}))$
that we note for further use.
\begin{cor}
\label{cor:ker of linearization}With the notation of Lemma \emph{\ref{lem:kerP(X_1,X_2) equiv to kernel of linearization}},
$\ker(A\otimes e_{1,1}+L(X_{1},X_{2}))$ is the final support of the
operator
\[
\left[\begin{array}{c}
q\\
-D'(X_{1},X_{2})C(X_{1},X_{2})q
\end{array}\right],
\]
where $q=\ker(A-P(X_{1},X_{2}))$.
\begin{cor}
\label{cor:when E(p) is large}With the notation of Lemma \emph{\ref{lem:kerP(X_1,X_2) equiv to kernel of linearization}},
suppose that $A\in\mathbb{C}1_{\mathcal{A}}$, $X_{1}$ and $X_{2}$
are free, $0\neq\ker(A-P(X_{1},X_{2}))\neq1_{\mathcal{A}}$, and set $p=\ker(e_{1,1}\otimes A+L(X_{1},X_{2}))$.
If $E_{n}(p)>0$ then either $X_{1}$ or $X_{2}$ has an eigenvalue.
\end{cor}

\end{cor}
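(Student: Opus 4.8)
The plan is to recognize $p=\ker(e_{1,1}\otimes A+L(X_{1},X_{2}))$ as the kernel of an operator of the kind analyzed in Corollary~\ref{cor:main with trace applied to B=00003DM_n}, and then to exclude the ``no point masses'' alternative of that corollary by a rationality obstruction. Since $A\in\mathbb{C}1_{\mathcal{A}}$ is selfadjoint, $A=\lambda 1_{\mathcal{A}}$ for some $\lambda\in\mathbb{R}$; and since $P$ is selfadjoint we may take the linearization in the affine form $L(Z_{1},Z_{2})=a_{0}\otimes 1+a_{1}\otimes Z_{1}+a_{2}\otimes Z_{2}$ with $a_{0},a_{1},a_{2}\in M_{n}(\mathbb{C})$ selfadjoint (Anderson's selfadjoint linearization \cite{anderson}, which is compatible with the block presentation (\ref{eq:linearization of(Z,Z)}) precisely when $C^{*}=B$ and $D^{*}=D$). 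Setting $b=-(\lambda e_{1,1}+a_{0})$, a selfadjoint matrix, one has
\[
-(e_{1,1}\otimes A+L(X_{1},X_{2}))=b\otimes 1_{\mathcal{A}}-a_{1}\otimes X_{1}-a_{2}\otimes X_{2},
\]
so, since $\ker M=\ker(-M)$, we get $p=\ker(b\otimes 1_{\mathcal{A}}-a_{1}\otimes X_{1}-a_{2}\otimes X_{2})$, with $\xi:=E_{n}(p)>0$ by hypothesis.

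I would then argue by contradiction. Suppose that neither $X_{1}$ nor $X_{2}$ has an eigenvalue, i.e.\ that the scalar distributions $\mu_{X_{1}}$ and $\mu_{X_{2}}$ have no atoms. Since $X_{1}$ and $X_{2}$ are $\tau$-free and $a_{1},a_{2},b$ are selfadjoint matrices, Corollary~\ref{cor:main with trace applied to B=00003DM_n} applies to $\xi=E_{n}(\ker(b\otimes 1_{\mathcal{A}}-a_{1}\otimes X_{1}-a_{2}\otimes X_{2}))>0$ and forces $n\,{\rm tr}_{n}(\xi)$ to be an integer.

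On the other hand, using $\tau_{n}={\rm tr}_{n}\circ E_{n}$ together with Lemma~\ref{lem:kerP(X_1,X_2) equiv to kernel of linearization}, one computes
\[
n\,{\rm tr}_{n}(\xi)=n\,\tau_{n}(p)=\tau(\ker(A-P(X_{1},X_{2}))).
\]
Put $q=\ker(A-P(X_{1},X_{2}))$. Since $q\ne 0$ and $\tau$ is faithful, $\tau(q)>0$; since $q\ne 1_{\mathcal{A}}$, the projection $1_{\mathcal{A}}-q$ is nonzero, whence $\tau(q)=1-\tau(1_{\mathcal{A}}-q)<1$. Thus $n\,{\rm tr}_{n}(\xi)=\tau(q)\in(0,1)$ is not an integer, contradicting the previous paragraph, and so at least one of $X_{1},X_{2}$ has an eigenvalue. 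I do not anticipate a real obstacle: the argument is essentially a concatenation of Lemma~\ref{lem:kerP(X_1,X_2) equiv to kernel of linearization} and Corollary~\ref{cor:main with trace applied to B=00003DM_n}. The steps needing some care are the reduction to the affine selfadjoint form of $L$ and the checking of the hypotheses behind Corollary~\ref{cor:main with trace applied to B=00003DM_n} --- namely that the variables $a_{j}\otimes X_{j}$ are affiliated with $M_{n}(\mathcal{A})$ (since $t\mapsto(i1_{n}-ta_{j})^{-1}$ is a bounded Borel function of $t\in\mathbb{R}$) and $E_{n}$-free (Proposition~\ref{prop:freeness over M_n}). The real content, and the one genuinely substantive remark, is that $n\,{\rm tr}_{n}(E_{n}(p))$ equals the $\tau$-mass of the kernel projection $\ker(A-P(X_{1},X_{2}))$, which lies strictly between $0$ and $1$ and therefore cannot be an integer; this is exactly what makes the atom-free alternative of Corollary~\ref{cor:main with trace applied to B=00003DM_n} impossible.
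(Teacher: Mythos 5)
Your proof of Corollary \ref{cor:when E(p) is large} is correct and follows exactly the same route as the paper's: apply Lemma \ref{lem:kerP(X_1,X_2) equiv to kernel of linearization} to get $n\tau_{n}(p)=\tau(\ker(A-P(X_{1},X_{2})))\in(0,1)$, and invoke the integrality dichotomy of Corollary \ref{cor:main with trace applied to B=00003DM_n} to conclude. Your extra care in rewriting $e_{1,1}\otimes A+L(X_{1},X_{2})$ in the affine form $b\otimes1_{\mathcal{A}}-a_{1}\otimes X_{1}-a_{2}\otimes X_{2}$ with selfadjoint matrix coefficients, and in checking affiliation and $E_{n}$-freeness, merely makes explicit the steps the paper leaves tacit.
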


\begin{proof}
By Lemma \ref{lem:kerP(X_1,X_2) equiv to kernel of linearization},
$n\tau_{n}(p)=\tau(A-P(X_{1},X_{2}))\in(0,1)$. In particular, $n\tau_{n}(p)$
is not an integer. The corollary follows from Corollary \ref{cor:main with trace applied to B=00003DM_n}.
\end{proof}
The following result allows us to treat cases in which $E_{n}(p)$
is not invertible.
\begin{lem}
\label{lem:bigger projections}Let $\mathcal{A}$ be a von Neumann
algebra with a faithful normal trace state $\tau$, and let $X\in\widetilde{M_{n}(\mathcal{A})}_{{\rm sa}}$.
Then there exist projections $q_{1},q_{2}\in M_{n}(\mathbb C)\otimes1_{\mathcal{A}}$
such that
\begin{enumerate}
\item $E_{n}(\ker(q_{1}Xq_{2}))$ and $E_{n}(\ker(q_{2}Xq_{1}))$ are invertible,
and
\item $n\tau_{n}(\ker(q_{1}Xq_{2}))-n\tau_{n}(\ker X)=n\tau_{n}(\ker(q_{2}Xq_{1}))-n\tau_{n}(\ker X)$
are integers. 
\end{enumerate}
\end{lem}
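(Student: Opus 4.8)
The plan is to build $q_1$ and $q_2$ by a two-step support construction and then verify (1) and (2) more or less by hand. All operators below are taken in the $*$-algebra $\widetilde{M_n(\mathcal A)}$ of operators affiliated with the finite von Neumann algebra $M_n(\mathcal A)$, in which $q_1Xq_2,\ q_2Xq_1,\ q_2X,\ Xq_2$ are honest elements, their kernel projections lie in $M_n(\mathcal A)$, and $(q_1Xq_2)^*=q_2Xq_1$, $(q_2X)^*=Xq_2$. I would put $p=\ker X$, $\Xi_0=E_n(p)$, let $q_2=1-\ker(\Xi_0)$ be the support projection of $\Xi_0$ (a projection in $M_n(\mathbb C)\otimes 1_{\mathcal A}$), and note that $p\le q_2$: indeed $E_n\bigl((1-q_2)p(1-q_2)\bigr)=(1-q_2)\Xi_0(1-q_2)=0$ and $E_n$ is faithful. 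Then I would put $p_1=\ker(q_2X)$, $\Xi_1=E_n(p_1)$, and let $q_1=1-\ker(\Xi_1)$ be the support projection of $\Xi_1$, so likewise $p_1\le q_1$. I claim these $q_1,q_2$ work.

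For (1) I would show that $E_n$ of each kernel dominates an invertible positive element of $M_n(\mathbb C)\otimes 1_{\mathcal A}$. Since $(q_1Xq_2)(1-q_2)=0$ and $(q_1Xq_2)p=q_1Xp=0$ (using $q_2p=p$), we get $\ker(q_1Xq_2)\ge(1-q_2)\vee p$, and this join is the orthogonal sum $(1-q_2)+p$ because $p\le q_2$. Applying $E_n$ and using $E_n(1-q_2)=1-q_2$, $E_n(p)=\Xi_0$ gives
\[
E_n\bigl(\ker(q_1Xq_2)\bigr)\ge(1-q_2)+\Xi_0 .
\]
The right-hand side is positive with support $(1-q_2)+q_2=1$, hence invertible (finite dimensionality of $M_n(\mathbb C)$), so $E_n(\ker(q_1Xq_2))$ is invertible. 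The same argument with $(q_1,p)$ replaced by $(q_2,p_1)$ gives $\ker(q_2Xq_1)\ge(1-q_1)+p_1$ and therefore invertibility of $E_n(\ker(q_2Xq_1))$.

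For (2) I would first note that the two quantities are automatically equal: since $q_1Xq_2$ and $q_2Xq_1$ are mutually adjoint, $1-\ker(q_1Xq_2)$ and $1-\ker(q_2Xq_1)$ are the initial and final projections of a single partial isometry, hence Murray--von Neumann equivalent, so $\tau_n(\ker(q_1Xq_2))=\tau_n(\ker(q_2Xq_1))$. To evaluate the common difference I would use two exact decompositions. Because $p_1\le q_1$ one has $\ker(q_2Xq_1)=(1-q_1)+p_1$ (an orthogonal sum of projections), so $n\tau_n(\ker(q_2Xq_1))=(n-\operatorname{rank}q_1)+n\tau_n(p_1)$. Applying the adjoint/partial-isometry remark to $q_2X$ gives $\tau_n(p_1)=\tau_n(\ker(q_2X))=\tau_n(\ker(Xq_2))$; and since $p\le q_2$ the same reasoning gives $\ker(Xq_2)=(1-q_2)+p$, whence $n\tau_n(p_1)=(n-\operatorname{rank}q_2)+n\tau_n(p)$. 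Combining,
\[
n\tau_n(\ker(q_2Xq_1))-n\tau_n(\ker X)=2n-\operatorname{rank}q_1-\operatorname{rank}q_2\in\mathbb Z ,
\]
and by the first remark the other difference equals the same integer.

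The step I expect to be the real obstacle is recognizing that $q_1$ and $q_2$ must be chosen \emph{asymmetrically}. The naive choice $q_1=q_2=1-\ker(E_n(\ker X))$ does make $E_n$ of the resulting kernel invertible, but the compression $qXq$ can acquire kernel beyond that of $X$ by an amount of trace that need not be an integer, so (2) fails. Taking $q_2$ to be the support of $E_n(\ker X)$ is exactly what simultaneously forces $\ker X\le q_2$ (so that $(1-q_2)\perp\ker X$ and $(1-q_2)+E_n(\ker X)$ is the relevant lower bound) and makes that lower bound invertible; taking $q_1$ to be the support of $E_n(\ker(q_2X))$ does the symmetric job on the other side while, as the computation above shows, costing only an integer in trace. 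The remaining points are routine facts about kernels, orthogonal sums, and Murray--von Neumann equivalence of closed operators affiliated with a finite von Neumann algebra.
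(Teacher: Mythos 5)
Your proof is correct and follows essentially the same route as the paper: the asymmetric, two-step support construction (first take the support $q$ of $E_n(\ker X)$, then the support of $E_n(\ker(qX))$), the identification $\ker(Xq)=\ker X+(1-q)$, the Murray--von Neumann equivalence $\ker(qX)\sim\ker(Xq)$ to transfer traces, and the comparison $E_n(\ker(q_2Xq_1))\ge E_n(\ker(Xq_1))$ for invertibility. The only cosmetic difference is that your labels $q_1,q_2$ are swapped relative to the paper's (harmless, since the statement is symmetric in $q_1\leftrightarrow q_2$), and you spell out the exact decomposition $\ker(q_2Xq_1)=(1-q_1)+p_1$ rather than accumulating the integer difference in two stages; the closing remark about why the symmetric choice $q_1=q_2$ fails is an accurate diagnosis of the key point, which the paper leaves implicit.
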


\begin{proof}
Set $p=\ker(X)$ and define $q_{1}$ to be the support projection
of $E_{n}(p)$, that is,
\[
q_{1}=1_{M_{n}(\mathcal{A})}-\ker(E_{n}(p)).
\]
 Since 
\[
E_{n}((1_{M_{n}(\mathcal{A})}-q_{1})p(1_{M_{n}(\mathcal{A})}-q_{1}))=(1_{M_{n}(\mathcal{A})}-q_{1})E_{n}(p)(1_{M_{n}(\mathcal{A})}-q_{1})=0,
\]
 and $E_{n}$ is faithful, we conclude that
\[
(1_{M_{n}(\mathcal{A})}-q_{1})p(1_{M_{n}(\mathcal{A})}-q_{1})=0.
\]
 Thus, $(1_{M_{n}(\mathcal{A})}-q_{1})p=0$, or equivalently, $p\le q_{1}$.
We show next that
\[
\ker(Xq_{1})=p+(1_{M_{n}(\mathcal{A})}-q_{1}).
\]
In fact, it is clear that $Xq_{1}p=Xp=0$ and $Xq_{1}(1-q_{1})=0$.
Moreover, $\ker(Xq_{1})$ cannot contain any nonzero projection $r$
orthogonal to $p+(1_{M_{n}(\mathcal{A})}-q_{1})$; such a projection
satisfies $r\le q_{1}$ so $Xq_{1}r=Xr\ne0$ because $r\le1-\ker(X)$.
We have
\[
E_{n}(\ker(Xq_{1}))=E_{n}(p)+(1_{M_{n}(\mathcal{A})}-q_{1}),
\]
so $E_{n}(\ker(Xq_{1}))$ is invertible and, in addition, 
\[
n\tau_{n}(\ker(Xq_{1}))-n\tau_{n}(p)=n\tau_{n}(\ker(Xq_{1}))-n\tau_{n}(E_{n}(p))=n\tau_{n}(1_{M_{n}(\mathcal{A})}-q_{1})
\]
is an integer, namely the rank of $1_{M_{n}(\mathcal{A})}-q_{1}$
viewed as a projection in $M_{n}(\mathbb C)$.

We observe next that, since $q_{1}X=(Xq_{1})^{*}$, $\ker(q_{1}X)$
is Murray-von Numann equivalent to $\ker(Xq_{1})$, and in particular
they have the same trace. We apply the preceding observation with
$q_{1}X$ in place of $X$. That is, we define $q_{2}$ to be the
support of $E_{n}(\ker(q_{1}X))$. Then the above arguments show that
$E_{n}(\ker(q_{1}Xq_{2}))$ is invertible and $n\tau_{n}(\ker(q_{1}Xq_{2}))-n\tau_{n}(\ker(q_{1}X))$
is an integer. We conclude that (2) is true because $\ker(q_{2}Xq_{1})$
is Murray-von Neumann equivalent to $\ker(q_{1}Xq_{2})$. Finally,
observe that $E_{n}(\ker(q_{2}Xq_{1}))\ge E_{n}(\ker(Xq_{1}))$ must
also be invertible, thus concluding the proof of (1).
\end{proof}
\begin{prop}
\label{prop:E(p) not invertible}Let $\mathcal{A}$ be a von Neumann
algebra with a faithful normal trace state $\tau$, let $X_{1},X_{2}\in\widetilde{\mathcal{A}}_{{\rm sa}}$
be two free random variables, and let $P(Z_{1},Z_{2})$ be a polynomial
in two noncommuting indeterminates such that $\tau(\ker(P(X_{1},X_{2})))\notin\{0,\frac{1}{2},1\}$.
Then either $X_{1}$ or $X_{2}$ has an eigenvalue.
\end{prop}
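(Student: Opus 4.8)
The plan is to linearize $P$, thereby reducing $\ker P(X_{1},X_{2})$ to the kernel of a selfadjoint linear pencil $a_{0}\otimes1+a_{1}\otimes X_{1}+a_{2}\otimes X_{2}$ over a matrix algebra, and then to exploit the arithmetic constraint of Corollary \ref{cor:main with trace applied to B=00003DM_n}. First I dispose of non-selfadjointness: since $X_{1},X_{2}$ are selfadjoint, $\ker P(X_{1},X_{2})=\ker\big((P^{*}P)(X_{1},X_{2})\big)$, where $P^{*}$ is obtained from $P$ by reversing each word and conjugating its coefficients; as $P^{*}P$ is selfadjoint with the same kernel, I may assume $P=P^{*}$. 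Pick a selfadjoint linearization $L(Z_{1},Z_{2})=a_{0}\otimes1+a_{1}\otimes Z_{1}+a_{2}\otimes Z_{2}$ of $P$ of the form (\ref{eq:linearization of(Z,Z)}) with $a_{0},a_{1},a_{2}\in M_{n}(\mathbb{C})$ selfadjoint (\cite{anderson}), and set $X=L(X_{1},X_{2})\in\widetilde{M_{n}(\mathcal{A})}_{{\rm sa}}$. By Lemma \ref{lem:kerP(X_1,X_2) equiv to kernel of linearization} applied with $A=0$, $n\tau_{n}(\ker X)=\tau(\ker P(X_{1},X_{2}))$; by hypothesis this number lies in $[0,1]$ and equals none of $0,\frac{1}{2},1$, so $2n\tau_{n}(\ker X)$ is not an integer.

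The main step is to bring $X$ into the scope of Corollary \ref{cor:main with trace applied to B=00003DM_n}, which requires $E_{n}$ of the relevant kernel projection to be invertible; since $E_{n}(\ker X)$ need not be, I apply Lemma \ref{lem:bigger projections} to $X$ to obtain projections $q_{1},q_{2}\in M_{n}(\mathbb{C})\otimes1_{\mathcal{A}}$ with $E_{n}(\ker(q_{1}Xq_{2}))$ and $E_{n}(\ker(q_{2}Xq_{1}))$ invertible and $n\tau_{n}(\ker(q_{1}Xq_{2}))=n\tau_{n}(\ker(q_{2}Xq_{1}))=n\tau_{n}(\ker X)+k$ for some $k\in\mathbb{Z}$. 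The corner $q_{1}Xq_{2}$ is no longer selfadjoint, so I dilate: in $M_{2n}(\mathcal{A})=M_{2}(M_{n}(\mathcal{A}))$ set
\[
Y=\left[\begin{array}{cc}
0 & q_{1}Xq_{2}\\
q_{2}Xq_{1} & 0
\end{array}\right].
\]
Because $q_{1},q_{2}\in M_{n}(\mathbb{C})\otimes1_{\mathcal{A}}$ and $X=a_{0}\otimes1+a_{1}\otimes X_{1}+a_{2}\otimes X_{2}$, one has $Y=\widetilde{a}_{0}\otimes1+\widetilde{a}_{1}\otimes X_{1}+\widetilde{a}_{2}\otimes X_{2}$ with $\widetilde{a}_{i}=\left[\begin{array}{cc}0 & q_{1}a_{i}q_{2}\\ q_{2}a_{i}q_{1} & 0\end{array}\right]\in M_{2n}(\mathbb{C})$, which is selfadjoint because $(q_{1}a_{i}q_{2})^{*}=q_{2}a_{i}q_{1}$ (as $a_{i}=a_{i}^{*}$). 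Moreover $\ker Y=\ker(q_{2}Xq_{1})\oplus\ker(q_{1}Xq_{2})$ is block diagonal, so $E_{2n}(\ker Y)$ is block diagonal with the two invertible blocks above, hence $E_{2n}(\ker Y)>0$; and since $\ker(q_{1}Xq_{2})$ and $\ker(q_{2}Xq_{1})=\ker\big((q_{1}Xq_{2})^{*}\big)$ are Murray\textendash von Neumann equivalent, $\tau_{2n}(\ker Y)=\tau_{n}(\ker(q_{1}Xq_{2}))=\tau_{n}(\ker X)+k/n$.

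Now suppose, for contradiction, that neither $X_{1}$ nor $X_{2}$ has an eigenvalue, so $\mu_{X_{1}}$ and $\mu_{X_{2}}$ carry no point masses. Writing $Y=\widetilde{a}_{0}\otimes1-(-\widetilde{a}_{1})\otimes X_{1}-(-\widetilde{a}_{2})\otimes X_{2}$, Corollary \ref{cor:main with trace applied to B=00003DM_n} with $n$ replaced by $2n$ and $\xi=E_{2n}(\ker Y)>0$ gives that $2n\,{\rm tr}_{2n}(E_{2n}(\ker Y))=2n\tau_{2n}(\ker Y)$ is an integer. But $2n\tau_{2n}(\ker Y)=2n\tau_{n}(\ker X)+2k=2\tau(\ker P(X_{1},X_{2}))+2k$, so $2\tau(\ker P(X_{1},X_{2}))\in\mathbb{Z}$, forcing $\tau(\ker P(X_{1},X_{2}))\in\{0,\frac{1}{2},1\}$, a contradiction. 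Hence $X_{1}$ or $X_{2}$ has an eigenvalue. I expect the dilation to be the crux: Corollary \ref{cor:main with trace applied to B=00003DM_n}, and Theorem \ref{thm:main, p>0, with trace} behind it, is available only for selfadjoint free variables with invertible kernel-expectation, whereas Lemma \ref{lem:bigger projections} naturally produces the non-selfadjoint corner $q_{1}Xq_{2}$; passing to the off-diagonal $2\times2$ dilation $Y$ restores both selfadjointness and the free-pencil form at the cost of doubling the dimension, and that doubling is exactly what accounts for the value $\frac{1}{2}$ appearing in the hypothesis.
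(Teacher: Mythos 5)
Your proposal is correct and takes essentially the same route as the paper: linearize $P$ (after replacing $P$ by $P^*P$), apply Lemma \ref{lem:bigger projections} to $X=L(X_1,X_2)$ to restore invertibility of the conditional expectation of the kernel, pass to the off-diagonal $2\times 2$ dilation $Y$ to restore selfadjointness, and then use the integrality constraint on $2n\tau_{2n}(\ker Y)$. The only difference is that you invoke Corollary \ref{cor:main with trace applied to B=00003DM_n} directly at the final step, which is in fact a cleaner reference than the paper's citation of Corollary \ref{cor:when E(p) is large} (stated for the $e_{1,1}\otimes A$ form), and you spell out the selfadjointness of the coefficients $\widetilde{a}_i$ and the block-diagonal form of $E_{2n}(\ker Y)$, which the paper leaves implicit.
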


\begin{proof}
Replacing $P$ by $P^{*}P$ does not change the kernel, so we suppose
that $P$ is selfadjoint. Let 
\[
L(Z_{1},Z_{2})=L(Z_{1},Z_{2})^*=\left[\begin{array}{cc}
0 & B(Z_{1},Z_{2})\\
C(Z_{1},Z_{2}) & D(Z_{1},Z_{2})
\end{array}\right]
\]
 be a selfadjoint linearization of $P$ constructed as above, and
let $n$ be the size of its matrix coefficients. As pointed out earlier,
\[
n\tau_{n}(\ker L(X_{1},X_{2}))=\tau(\ker(P(X_{1},X_{2})).
\]
 Set $X=L(X_{1},X_{2})$ and let $q_{1}$ and $q_{2}$ be given by
Lemma \ref{lem:bigger projections}. Then $q_{1}Xq_{2}$ and $q_{2}Xq_{1}$
are again linear polynomials in $X_{1},X_{2}$ with coefficients in
$M_{n}(\mathbb C)$, but they are not selfadjoint. However, the matrix
\[
Y=\left[\begin{array}{cc}
0 & q_{1}Xq_{2}\\
q_{2}Xq_{1} & 0
\end{array}\right]
\]
 is a linear polynomial with coefficients in $M_{2n}$, it is selfadjoint,
and has kernel $\ker(q_{2}Xq_{1})\oplus\ker(q_{1}Xq_{2})$. Therefore,
Lemma \ref{lem:bigger projections}(1) implies that
\[
2n\tau_{2n}(\ker(Y))=n\tau_{n}(\ker(q_{2}Xq_{1}))+n\tau_{n}(\ker(q_{1}Xq_{2}))
\]
differs from $2\tau(\ker(P(X_{1},X_{2}))$ by an integer, and the
hypothesis implies that this is not an integer. Finally, Lemma \ref{lem:bigger projections}(2)
implies that $E_{2n}(\ker(Y))$ is invertible, and the desired conclusion
follows from Corollary \ref{cor:when E(p) is large}.
\end{proof}
Some conclusions about the variables $X_{1},X_{2}\in\widetilde{\mathcal{A}}_{{\rm sa}}$
can be drawn even in case $\tau(\ker(P(X_{1},X_{2}))=\frac{1}{2}$
and $E_{n}(p)$ is not invertible. We use the anticommutator $P(X_{1},X_{2})=X_{1}X_{2}+X_{2}X_{1}$
as an illustration. Suppose that $\lambda\in\mathbb{R}$ is an eigenvalue
of $P(X_{1},X_{2})$ such that $q=\ker(\lambda1_{\mathcal{A}}-P(X_{1},X_{2}))<1_{\mathcal{A}}$.
It follows that the operator
\[
\left[\begin{array}{ccc}
\lambda1_{\mathcal{A}} & X_{1} & X_{2}\\
X_{1} & 0 & 1_{\mathcal{A}}\\
X_{2} & 1_{\mathcal{A}} & 0
\end{array}\right]
\]
has a kernel $p$ that is the final projection of
\[
\left[\begin{array}{c}
q\\
X_{2}q\\
X_{1}q
\end{array}\right].
\]
If $E_{3}(p)$ is invertible, Corollary \ref{cor:when E(p) is large}
shows that one of the operators $X_{1},X_{2}$ has an eigenvalue.
Suppose then that neither $X_{1}$ nor $X_{2}$ has eigenvalues, so
$E_{3}(p)$ is not invertible. In this case, there exists a projection
$r\in M_{3}(\mathbb C)$ of rank one such that $(r\otimes1_{\mathcal{A}})E_{3}(p)=0$.
We have
\[
E_{3}((r\otimes1_{\mathcal{A}})p(r\otimes1_{\mathcal{A}}))=(r\otimes1_{\mathcal{A}})E_{3}(p)(r\otimes1_{\mathcal{A}})=0,
\]
and we conclude that $(r\otimes1_{\mathcal{A}})p(r\otimes1_{\mathcal{A}})=0$
and thus $(r\otimes1_{\mathcal{A}})p=0$ as well. If the vector $(\alpha,\beta,\gamma)\in\mathbb{C}^{3}$
generates the range of $r$, then
\[
\alpha q+\beta X_{1}q+\gamma X_{2}q=0.
\]
Since $X_{j}$ has no eigenvalues, we deduce that $\beta\gamma\ne0$,
so $\beta X_{1}+\gamma X_{2}$ has the eigenvalue $-\alpha$. The
argument in Proposition \ref{prop:E(p) not invertible}, applied to
the selfadjoint polynomial
\[
\left[\begin{array}{cc}
0 & \beta X_{1}+\gamma X_{2}\\
\overline{\beta}X_{1}+\overline{\gamma}X_{2} & 0
\end{array}\right]
\]
 shows now that we necessarily have $\tau(\ker(\alpha+\beta X_{1}+\gamma X_{2}))=\frac{1}{2}$.

\end{document}